\newtheorem{theorem}{Theorem}[section]
\newtheorem{lemma}[theorem]{Lemma}
\newtheorem{remark}[theorem]{Remark}
\newtheorem{proposition}[theorem]{Proposition}
\newtheorem{corollary}[theorem]{Corollary}
\newtheorem{rmk}[theorem]{Remark}
\DeclareMathOperator{\ord}{ord}
\def\\{\cr}
\def\({\left(}
\def\){\right)}
\def\[{\left[}
\def\]{\right]}
\def\<{\langle}
\def\>{\rangle}
\def\li{\operatorname{li}}
\def\cP{\mathcal P}
\def\eps{\varepsilon}
\def\Z{\mathbb{Z}}
\def\Q{\mathbb{Q}}
\def\ka{\kappa}
\def\notdivides{\mathrel{\kern-3pt\not\!\kern3.5pt\bigm|}}
\renewcommand*\l@section[2]{%
  \ifnum \c@tocdepth >\z@
    \addpenalty\@secpenalty
    \addvspace{0.2em \@plus\p@}%
    \setlength\@tempdima{1.5em}%
    \begingroup
      \parindent \z@ \rightskip \@pnumwidth
      \parfillskip -\@pnumwidth
      \leavevmode \bfseries
      \advance\leftskip\@tempdima
      \hskip -\leftskip
      #1\nobreak\hfil \nobreak\hb@xt@\@pnumwidth{\hss #2}\par
\endgroup \fi}
\numberwithin{equation}{section}
\title{Random ordering in modulus of
consecutive Hecke eigenvalues of 
primitive forms}
\author{
{\sc Yuri ~F.~Bilu}, 
{\sc Jean-Marc ~Deshouillers},\\ 
{\sc  Sanoli ~Gun}, 
{\sc Florian~Luca}
}
\date{}
\begin{document}

\hfuzz 3.1pt

\maketitle

\begin{abstract}
Let $\tau(\cdot)$ be the classical Ramanujan $\tau$-function and 
let~$k$ be a positive integer such that ${\tau(n)\ne 0}$ for 
${1\le n\le k/2}$. (This is known to be true for ${k < 10^{23}}$, and, 
conjecturally, for all~$k$.) Further, let~$\sigma$ be a permutation 
of the set ${\{1,...,k\}}$. We show that there exist infinitely 
many positive integers~$m$ such that 
${|\tau(m+\sigma(1))|< |\tau(m+\sigma(2))|<...<|\tau(m+\sigma(k))|}$. 
We also obtain a similar result for Hecke eigenvalues of primitive
forms of square-free level.

\bigskip

\textit{2010 Mathematics Subject Classification} 11F30; 11F11, 11N36

Keywords: Fourier coefficients of modular forms; sieve; Sato-Tate
\end{abstract}

\section{Introduction}
\label{sintr}
Throughout the article a \textsl{primitive form} of 
weight~$\kappa$ and level~$N$ means a
holomorphic cusp form of weight~$\kappa$ for $\Gamma_0(N)$ with 
the trivial character which is also a normalized Hecke eigenform 
for all Hecke operators as well of all 
Atkin-Lehner involutions (see page 29 of \cite{KO} for more details). 
Throughout the paper, we will also assume that $N$ is square-free.
 A \textsl{non-CM  primitive form} is an abbreviation 
for ``primitive form without Complex Multiplication''.  

Let~$f$ be a  primitive form and
$$
f(z) : = \sum_{n \ge 1} a_f(n) q^n, \qquad q=e^{2\pi iz}
$$ 
be its Fourier expansion at $i\infty$. 
In particular, if~$f$ is of weight ${\kappa=12}$ and level ${N=1}$
then
$$
f (z) 
= \Delta(z) 
:=\sum_{n\ge 1} \tau(n)q^n=q\prod_{\ell\ge 1} (1-q^\ell)^{24},
$$
where $\tau(n)$ is the classical Ramanujan function.

It is well known that the Fourier-coefficients $a_f(n)$'s of 
any such primitive form $f$
are totally real algebraic numbers.
There are quite a few results demonstrating ``random'' behavior 
of the signs of $\tau(n)$, or, more generally, the coefficients of a 
general primitive forms; see, for instance, \cite{GS12, GKR15,
KS09, Ma12, MR15} and the references therein.  For instance, 
Matomäki and Radziwiłł~\cite{MR15}  have shown that the 
non-zero coefficients of primitive forms for $\Gamma_0(1)$  
are positive and negative with the same frequency.
They also show that for large enough $x$, the number of sign
changes in the sequence $\{ a_f(n)\}_{ n\le x}$ 
is of the order of magnitude
$$
\# \{ n \le x :  a_f(n) \ne 0 \} 
\asymp x 
\prod_{\genfrac{}{}{0pt}{}{p \le x}{a_f(p) = 0}} \left(1 - \frac{1}{p}\right).
$$

In  this paper, we work in a different direction, and study the 
behavior of absolute values of non-zero coefficients. 
Classical results of Rankin~\cite{Ra39,Ra70} 
$$
\sum_{n\le x}|\tau(n)|^2\asymp x^{12} \quad \text{and} 
\quad \limsup_{n\to\infty}\frac{|\tau(n)|}{n^{11/2}}=+\infty 
$$
imply
that the sequence  $|\tau(n)|$ is not ultimately monotonic; 
in other words, each of the inequalities
$$
|\tau(m+1)|<|\tau(m+2)|, \qquad |\tau(m+2)|<|\tau(m+1)|
$$
holds for infinitely many~$m$. In this article we obtain 
(as a special case 
of a more general result) a similar statement for more 
than two consecutive values of~$\tau$.

\begin{theorem}
\label{thtau}
Let~$k$ be a positive integer such that
\begin{equation}
\label{etauhalf}
\tau(n)\ne 0 \qquad (1\le n\le k/2).
\end{equation}
Then for every permutation~$\sigma$ of the set 
${\{1, \ldots, k\}}$, there exist 
infinitely many positive integers~$m$ such that 
\begin{equation}
\label{etausig}
0<|\tau(m+\sigma(1))|<|\tau(m+\sigma(2))|<\cdots< |\tau(m+\sigma(k))|. 
\end{equation}
\end{theorem}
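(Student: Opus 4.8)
The plan is to reduce everything to a statement about the arguments $\theta_p$ in the Sato–Tate parametrization of $\tau(p)$, and then to combine a sieve argument with Sato–Tate equidistribution. First I would fix the multiplicativity structure: write each $m+i$ (for $1\le i\le k$) as $m+i = n_i p_i$, where $p_i$ is a large prime and $n_i$ is a fixed small integer, arranged so that the $n_i$ are pairwise chosen to make $|\tau(n_i)|$ well separated and nonzero — here is where hypothesis \eqref{etauhalf} is used, since the $n_i$ will range over $\{1,\dots,k\}$ or a comparable set, and we need $\tau(n_i)\ne 0$. By multiplicativity $\tau(m+i) = \tau(n_i)\tau(p_i)$ provided $\gcd(n_i,p_i)=1$, so $|\tau(m+i)| = |\tau(n_i)|\cdot 2 p_i^{11/2}|\sin\theta_{p_i}|\cdot(\text{something})$; more precisely $\tau(p) = 2p^{11/2}\cos\theta_p$, so $|\tau(m+i)| = 2|\tau(n_i)|\, p_i^{11/2}\,|\cos\theta_{p_i}|$. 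The strategy is to choose the primes $p_i$ to all lie in a short dyadic range (so the factors $p_i^{11/2}$ are essentially equal), and then to force the ordering of the $|\tau(m+i)|$ purely through the factors $|\tau(n_i)|\cdot|\cos\theta_{p_i}|$.

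Next I would set up the simultaneous prime-detecting sieve. We want infinitely many $m$ such that, for a suitable fixed tuple $(n_1,\dots,n_k)$ of admissible residues, each $m+i$ factors as $n_i$ times a prime $p_i$ in a prescribed dyadic interval, with $\gcd(m+i, n_j)$ controlled so no collisions occur. This is a standard (if delicate) application of a linear sieve / Selberg-type sieve for the system of linear forms $\{(x+i)/n_i\}_{i\le k}$; admissibility of the tuple must be checked, and one wants not just that infinitely many such $m$ exist but that the associated prime tuples $(\theta_{p_1},\dots,\theta_{p_k})$ become equidistributed (jointly, in the limit) with respect to the product Sato–Tate measure on $[0,\pi]^k$. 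Getting joint equidistribution of the Satake angles of the $k$ primes $p_i$ simultaneously is the crux: one needs an effective form of Sato–Tate for the relevant $L$-functions together with a sieve weight that does not destroy equidistribution — I expect this to be the main technical obstacle, handled by inserting the sieve majorant into a sum over prime tuples and invoking Sato–Tate (via symmetric-power $L$-functions) term by term.

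Finally, the combinatorial endgame: given the permutation $\sigma$, I would choose the fixed multipliers $n_{\sigma(1)},\dots,n_{\sigma(k)}$ and a target open box $U\subset[0,\pi]^k$ of angles so that on $U$ one has the strict chain
\[
|\tau(n_{\sigma(1)})|\,|\cos\theta_{p_{\sigma(1)}}|
< |\tau(n_{\sigma(2)})|\,|\cos\theta_{p_{\sigma(2)}}|
< \cdots <
|\tau(n_{\sigma(k)})|\,|\cos\theta_{p_{\sigma(k)}}|,
\]
with all factors bounded away from $0$; such a box exists because the map $(\theta_1,\dots,\theta_k)\mapsto(|\cos\theta_1|,\dots,|\cos\theta_k|)$ is open onto a neighborhood of an interior point where the coordinates can be prescribed in any order, and the Sato–Tate measure gives positive mass to every nonempty open set. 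Multiplying through by the common factor $\approx 2p^{11/2}$ (valid since all $p_i$ lie in the same short interval — here one must be slightly careful that the spread $p_i^{11/2}$ is small compared to the gaps between successive $|\tau(n)\cos\theta|$ values, which can be arranged by shrinking the dyadic window) recovers the required inequality \eqref{etausig}. Since the sieve produces infinitely many such $m$, we are done. The general primitive-form case is identical, replacing $\tau$ by $a_f$, $11/2$ by $(\kappa-1)/2$, and using Sato–Tate for non-CM forms of square-free level.
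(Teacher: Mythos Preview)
Your proposal has a fundamental gap: you require that, for infinitely many $m$, each of the $k$ linear forms $(m+i)/n_i$ is simultaneously prime. This is a variant of the Hardy--Littlewood prime $k$-tuples conjecture and is completely out of reach; even the case $k=2$ is essentially the twin prime problem. No sieve can currently produce such $m$, and calling it ``a standard (if delicate) application of a linear sieve'' misidentifies the difficulty. Compounding this, you then need joint Sato--Tate equidistribution of $(\theta_{p_1},\ldots,\theta_{p_k})$ over such constrained tuples, which is also unavailable: Sato--Tate is proved for a single prime varying freely, not for several primes tied together by simultaneous linear conditions. A secondary issue is your use of hypothesis~\eqref{etauhalf}: it only gives $\tau(n)\ne0$ for $n\le k/2$, not for $n\le k$, so you cannot directly take the $n_i$ from $\{1,\ldots,k\}$.

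The paper's route is designed precisely to avoid these obstacles. Instead of forcing $m+\nu_i$ to be $\nu_i$ times a \emph{variable} prime, one writes $m+\nu_i=\nu_i\,p_i^{\ell_i}\,L_i(n)$ where the $p_i$ are \emph{fixed} primes and the exponents $\ell_i$ are chosen (via $\lambda_f(p^\ell)=\sin((\ell+1)\theta_p)/\sin\theta_p$) so that the values $|\lambda_f(p_i^{\ell_i})|$ already realize the desired ordering with large multiplicative gaps. The Fundamental Lemma of sieve theory then produces infinitely many $n$ for which each $L_i(n)$ is a square-free composite whose prime factors all exceed $x^\eta$, hence with a bounded number of prime factors --- this is achievable, unlike simultaneous primality. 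Sato--Tate enters only in the weak form that the set of primes with $|\lambda_f(p)|<\varepsilon$ has relative density $O(\varepsilon)$; avoiding these (Theorem~\ref{thavoiding}) gives two-sided bounds $c_1\le|\lambda_f(L_i(n))|\le c_1^{-1}$, so the fixed factors $|\lambda_f(p_i^{\ell_i})|$ dominate and impose the ordering. The passage from the hypothesis $\tau(n)\ne0$ for $n\le k/2$ to a block of $k$ consecutive nonzero values is handled separately as the implication \ref{inezhalf}$\Rightarrow$\ref{inezk} in Theorem~\ref{thdes}.
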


In fact,  existence of at least one~$m$ satisfying \eqref{etausig} 
implies \eqref{etauhalf}, see Theorem~\ref{thdes} below; 
in other words, \eqref{etauhalf} is a necessary and 
sufficient condition for \eqref{etausig} to happen infinitely often. 

It is known \cite[Theorem~1.4]{ZY15} that   ${\tau(n) \ne 0}$ when
$$
n\le982149821766199295999\approx 9\cdot 10^{20}.
$$
We also refer to the Corollary~1.2 of the unpublished 
article~\cite{DHZ13}, 
which claims that ${\tau(n) \ne 0}$ for all
${n\le 816212624008487344127999\approx8\cdot10^{23}}$.

According to a famous conjecture of Lehmer,  
${\tau(n)\ne 0}$ for all $n$.
If this  conjecture holds true, then 
Theorem~\ref{thtau} applies  to all~$k$.

In this context, one has another famous conjecture
known as Maeda's conjecture. Let $T_n(x)$
be the characteristic polynomial of the $n$-th Hecke operator $T_n$
acting on the vector space of cusp forms of
weight $\kappa$ and level $1$, denoted $S_{\kappa}(1)$.
It is well known that $T_n(x)$ is a polynomial with integer
coefficients. Maeda~\cite{HM} conjectured that 
for any non-zero natural number $n$,
the polynomial $T_n(x)$ is irreducible over $\Q$ 
with Galois group ${\mathfrak{S}}_d$,
where $d$ is the dimension of $S_{\kappa}(1)$ 
and ${\mathfrak{S}}_d$ is the symmetric group on $d$ symbols. 
If the dimension $d$ of $S_{\kappa}(1)$ is strictly
greater than one and Maeda's conjecture is true, then 
Theorem~1.1 applies to all~$k$.
However,  Maeda's conjecture does not
imply Lehmer's conjecture.

\newpage

Our principal result is the following general theorem. 
\begin{theorem}
\label{thnew}
Let $f_1, \ldots, f_k$ be primitive forms 
of square-free levels, 
not necessarily of same weights, and  
${\nu_1, \ldots, \nu_k}$  be distinct positive 
integers such that 
\begin{equation*}
%\label{elek}
a_{f_i}(\nu_i)\ne 0 \qquad (1\le i\le k).
\end{equation*} 
Then there exist infinitely many positive 
integers~$m$ such that 
\begin{equation}
\label{etausnu}
0<|\lambda_{f_1}(m+\nu_1)|<|\lambda_{f_2}(m+\nu_2)|<\cdots
 <|\lambda_{f_k}(m+\nu_k)|,
\end{equation} 
where
$\lambda_{f_i}(n) = a_{f_i}(n)/n^{(\kappa_i -1)/2}$ for any positive
integer $n$ and $1 \le i \le k$.
\end{theorem}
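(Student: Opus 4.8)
The plan is to combine Hecke multiplicativity in the ``$\nu$-direction'' with a multidimensional sieve and the Sato--Tate equidistribution law, the goal being to write each $|\lambda_{f_i}(m+\nu_i)|$ as a product of a fixed nonzero constant, a tightly prescribed ``main factor'', and a bounded, nonvanishing ``error factor''.

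\smallskip
\noindent\emph{Step 1 (multiplicative reduction).} Set $Q:=\lcm(\nu_1,\dots,\nu_k)$ and look only for $m$ of the form $m=QM$, so that $m+\nu_i=\nu_i(a_iM+1)$ with $a_i:=Q/\nu_i$ pairwise distinct positive integers. A finite list of congruence conditions on $M$ (e.g. fixing $M$ modulo a suitable power of $Q$) guarantees $\gcd(a_iM+1,\nu_i)=1$, whence $\lambda_{f_i}(m+\nu_i)=\lambda_{f_i}(\nu_i)\,\lambda_{f_i}(a_iM+1)$; here $\lambda_{f_i}(\nu_i)\ne 0$ by hypothesis, which is exactly where the assumption $a_{f_i}(\nu_i)\ne 0$ is used. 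It thus suffices to produce infinitely many $M$ in a prescribed residue class for which $\bigl(|\lambda_{f_i}(a_iM+1)|\bigr)_{i=1}^{k}$ is positive and strictly increasing.

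\smallskip
\noindent\emph{Step 2 (structured factorizations, via sieve and Sato--Tate).} Fix in advance, for each $i$, primes $P_{i,1},\dots,P_{i,g_i}$, all distinct across all $i$ and $j$, whose Satake angle $\theta_{P_{i,j}}$ relative to $f_i$ --- so that $\lambda_{f_i}(P_{i,j})=2\cos\theta_{P_{i,j}}$ --- lies in a short arc about $0$; such primes exist by the Sato--Tate law (by its classical CM analogue, combined with a splitting condition, when $f_i$ has complex multiplication), and, together with the distribution of primes in progressions, they may be taken in any prescribed residue class. By the Chinese Remainder Theorem require that each $P_{i,j}$ divide $a_iM+1$ exactly once, and write $a_iM+1=(P_{i,1}\cdots P_{i,g_i})\,R_i$. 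A weighted multidimensional sieve, applied to the $k$ linear forms $M\mapsto(a_iM+1)/(P_{i,1}\cdots P_{i,g_i})$, then yields infinitely many admissible $M$ for which every $R_i$ is a product of at most $D_k$ primes, all larger than a fixed power of $M$; extra sieve conditions delete the thin set of prime powers at which $\lambda_{f_i}$ vanishes (supersingular primes and so on, and for CM $f_i$ an arrangement of the inert part of $R_i$ into even exponents), and, by following the Satake data of the prime factors of the $R_i$ through Sato--Tate for those large primes, one retains infinitely many such $M$ for which moreover $|\lambda_{f_i}(R_i)|$ stays bounded below by a fixed positive constant.

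\smallskip
\noindent\emph{Step 3 (choice of the parameters).} For such an $M$, with $m=QM$, one has
$$
|\lambda_{f_i}(m+\nu_i)|=|\lambda_{f_i}(\nu_i)|\cdot\Bigl(\textstyle\prod_{j=1}^{g_i}|2\cos\theta_{P_{i,j}}|\Bigr)\cdot|\lambda_{f_i}(R_i)|,
$$
where the middle factor lies in a fixed subinterval of $(1,2^{g_i}]$ close to $2^{g_i}$ and the last lies in a fixed $[c,2^{D_k}]$ with $c>0$. Picking $g_1<g_2<\dots<g_k$ with consecutive gaps large compared with $D_k$ and with $\max_i\bigl|\log|\lambda_{f_i}(\nu_i)|\bigr|$, and then the arcs short enough, makes the $(i+1)$-st of these quantities exceed the $i$-th for every such $M$; this gives \eqref{etausnu} for infinitely many $m$.

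\smallskip
The main obstacle is Step~2. One must sieve the $k$ linked linear forms $a_iM+1$ while simultaneously constraining the arithmetic of the distinguished prime divisors $P_{i,j}$ and of the cofactors $R_i$ by Sato--Tate (and, for CM forms, by splitting conditions), and at the same time keep every $\lambda_{f_i}$ from vanishing --- a constraint which is genuinely restrictive for CM forms, since then a positive proportion of integers already have vanishing Hecke eigenvalue. Making these equidistribution inputs quantitative enough to be fed into the sieve, in particular with a usable level of distribution, is the crux of the matter.
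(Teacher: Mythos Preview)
Your architecture is exactly the paper's: split $m+\nu_i=\nu_i\cdot(\text{forced factor})\cdot R_i$ by CRT, then sieve so that each $R_i$ is a square-free almost-prime all of whose prime factors are large and have $|\lambda_{f_i}|$ bounded away from zero (the latter by discarding a Sato--Tate-thin set of primes). The paper forces the ordering by making the forced factor \emph{small}: it picks a single auxiliary prime $p_i$ per index with $\theta_{p_i}/\pi\notin\Q$ and chooses the exponent $\ell_i$ so that $|\lambda_{f_i}(p_i^{\ell_i})|=|\sin((\ell_i{+}1)\theta_{p_i})/\sin\theta_{p_i}|$ falls below any prescribed threshold; crucially, $\#\Sigma=\pi(2K)+k$ stays fixed, independent of the $\ell_i$. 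You instead push the forced factor \emph{up}, near $2^{g_i}$, by using $g_i$ distinct primes each with $|\lambda_{f_i}|$ near $2$.

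Here is the point you have not confronted. The sieve constants in Theorems~\ref{thsieving} and~\ref{thavoiding}---hence your $D_k$ and the lower bound $c$ on $|\lambda_{f_i}(R_i)|$---depend on $\#\Sigma$, and in your setup $\Sigma$ must contain every $P_{i,j}$ (they divide the leading coefficients of the shifted linear forms), so $\#\Sigma\ge\sum_i g_i$. As stated in the paper, the dependence enters through factors like $2^{\#\Sigma}$, which would make $D_k$ and $\log(1/c)$ blow up faster than any gap $g_{i+1}-g_i$ you can arrange. The paper's device sidesteps this entirely because varying the \emph{exponents} $\ell_i$ leaves $\#\Sigma$ untouched. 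Your scheme can be repaired---either argue that the true dependence is through $\prod_{p\in\Sigma,\,p>2k}(1-k/p)^{-1}$, which stays bounded provided the $P_{i,j}$ are taken large enough, or simply use one prime $P_i$ per index with $|\lambda_{f_i}(P_i)|$ lying in a prescribed short interval (Sato--Tate supplies these), which keeps $\#\Sigma$ fixed---but as written Step~3 does not close against the black-box sieve constants.

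A side remark: primitive forms of square-free level are automatically non-CM (Remark~\ref{rnocm}), so all your CM parentheses are superfluous.
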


In fact, we prove (see Remark~\ref{requant}) that for 
sufficiently large positive 
number~$x$, there are at least 
${c x/(\log x)^k}$ positive integers ${m\le x}$ 
satisfying \eqref{etausnu}. 
Here ${c>0}$ depends on ${f_1, \ldots, f_k,\nu_1, \ldots, \nu_k}$ 
and ``sufficiently 
large'' translates as ``exceeding a certain quantity 
depending 
on ${f_1, \ldots, f_k, \nu_1, \ldots, \nu_k}$'' .

It is clear from our proof that, when the forms ${f_1, \ldots, f_k}$ have  equal weights,  inequality~\eqref{etausnu}  holds true with $a_{f_i}(\cdot)$ instead of $\lambda_{f_i}(\cdot)$. An interesting special case occurs when 
${f_1=\cdots=f_k=f}$.

\begin{theorem}
\label{tcorold}
Let~$f$ be a primitive form of square-free level
and ${\nu_1, \ldots, \nu_k}$ be distinct positive 
integers such that 
\begin{equation*}
a_{f}(\nu_i)\ne 0 \qquad (1\le i\le k).
\end{equation*} 
Then there exist infinitely many positive 
integers~$m$ such that 
\begin{equation}
\label{etausnu1}
0<|a_{f}(m+\nu_1)|<|a_{f}(m+\nu_2)|<\cdots <|a_{f}(m+\nu_k)|.
\end{equation} 
In particular, if $k$ is a positive integer such that
\begin{equation}
\label{etnez}
a_f(n)\ne 0 \qquad (1\le n\le k),
\end{equation} 
then for every permutation~$\sigma$ of the set 
${\{1, \ldots, k\}}$, there exist 
infinitely many positive integers~$m$ such that 
\begin{equation}
\label{eq:1}
0<|a_f(m+\sigma(1))|<|a_f(m+\sigma(2))|<\cdots 
<|a_f(m+\sigma(k))|.
\end{equation} 
\end{theorem}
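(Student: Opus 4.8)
The plan is to obtain Theorem~\ref{tcorold} as a direct specialization of Theorem~\ref{thnew}; all the analytic substance already sits in the latter, so the work here is only to check hypotheses and to unwind notation. First I would apply Theorem~\ref{thnew} with ${f_1=\cdots=f_k=f}$. This $k$-tuple consists of primitive forms of square-free level, they all share the same weight $\kappa$, and by hypothesis $a_f(\nu_i)\ne0$ for $1\le i\le k$ with $\nu_1,\dots,\nu_k$ distinct positive integers, so the hypotheses of Theorem~\ref{thnew} hold verbatim. It therefore produces infinitely many positive integers $m$ with
\[
0<|\lambda_f(m+\nu_1)|<|\lambda_f(m+\nu_2)|<\cdots<|\lambda_f(m+\nu_k)|.
\]
Since the weights coincide, I would then invoke the observation made right after Theorem~\ref{thnew}, namely that for forms of equal weight the inequality may be stated with $a_{f_i}(\cdot)$ in place of $\lambda_{f_i}(\cdot)$; applying it with each $f_i=f$ gives infinitely many $m$ with ${0<|a_f(m+\nu_1)|<\cdots<|a_f(m+\nu_k)|}$, which is exactly~\eqref{etausnu1}.

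For the ``in particular'' assertion, let $k$ satisfy~\eqref{etnez} and let $\sigma$ be a permutation of ${\{1,\dots,k\}}$. I would set $\nu_i=\sigma(i)$ for $1\le i\le k$. Then $\nu_1,\dots,\nu_k$ is simply $1,\dots,k$ in some order, hence they are distinct positive integers, and $a_f(\nu_i)=a_f(\sigma(i))\ne0$ by~\eqref{etnez} since $\sigma(i)\in\{1,\dots,k\}$. The first part of the theorem, already established, then yields infinitely many positive integers $m$ with
\[
0<|a_f(m+\sigma(1))|<|a_f(m+\sigma(2))|<\cdots<|a_f(m+\sigma(k))|,
\]
which is~\eqref{eq:1}.

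At this level there is essentially no obstacle: the only point requiring a little care is that the passage from $\lambda_f$ to $a_f$ cannot be deduced formally from the $\lambda_f$-inequality (dividing the $i$-th term by $(m+\nu_i)^{(\kappa-1)/2}$ with distinct $\nu_i$ need not preserve an ordering), so one must appeal to the equal-weight remark accompanying Theorem~\ref{thnew} rather than to its statement alone. The genuine difficulty — the sieve construction together with the Sato--Tate type equidistribution input used to control the sizes of the relevant Hecke eigenvalues simultaneously — is entirely absorbed into the proof of Theorem~\ref{thnew}.
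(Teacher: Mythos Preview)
Your proposal is correct and matches the paper's own approach: the paper proves Theorems~\ref{thnew} and~\ref{tcorold} in parallel in Section~\ref{sproof}, using the same sieving construction and applying at the final step the estimate~\eqref{etaumi} (which normalizes by $m^{(\kappa-1)/2}$ rather than $(m+\nu_i)^{(\kappa-1)/2}$) to get the $a_f$-version---this is precisely the content of the equal-weight remark you invoke. Your observation that the $a_f$-inequality does not follow formally from the $\lambda_f$-inequality, and hence requires going back to the proof, is exactly the point.
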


In fact,  one can do even better: to give a  necessary and 
sufficient condition for having \eqref{eq:1} infinitely often.

\begin{theorem}
\label{thdes}
Let~$k$ be a positive integer. Then for a 
primitive form~$f$ of square-free level
the following three conditions  are equivalent.  
 
\begin{enumerate}
\item
\label{inezhalf}
We have 
\begin{equation}
\label{etnezhalf}
a_f(n)\ne 0 \qquad (1\le n\le k/2).
\end{equation} 

\item
\label{inezk}
For some positive integer~$\nu$ we have 
\begin{equation}
\label{etnezk}
a_f(\nu+n)\ne 0 \qquad (1\le n\le k).
\end{equation} 

\item
\label{iperm}
For every permutation~$\sigma$ of the set 
${\{1, \ldots, k\}}$, there exist 
infinitely many positive integers~$m$ such that 
\begin{equation*}
0<|a_f(m+\sigma(1))|<|a_f(m+\sigma(2))|<\cdots 
<|a_f(m+\sigma(k))|.
\end{equation*} 
\end{enumerate}
\end{theorem}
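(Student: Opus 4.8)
The plan is to prove Theorem~\ref{thdes} by establishing the cycle of implications \eqref{inezhalf}~$\Rightarrow$~\eqref{inezk}~$\Rightarrow$~\eqref{iperm}~$\Rightarrow$~\eqref{inezhalf}. The implication \eqref{inezk}~$\Rightarrow$~\eqref{iperm} is essentially Theorem~\ref{tcorold}: given~$\nu$ with $a_f(\nu+n)\ne 0$ for $1\le n\le k$, set $\nu_i=\nu+\sigma^{-1}(i)$ (or more directly, reorder so that $a_f(\nu_i)\ne 0$ for the distinct positive integers $\nu_1,\dots,\nu_k$ arising from the permutation) and apply \eqref{etausnu1} with these shifts; the resulting~$m$ and the substitution $m'=m+\nu$ yield infinitely many $m'$ with the required chain of inequalities for the prescribed permutation. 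The implication \eqref{iperm}~$\Rightarrow$~\eqref{inezk} is trivial: taking any permutation~$\sigma$ and any one of the infinitely many valid~$m$, the displayed inequalities force $a_f(m+n)\ne 0$ for all $1\le n\le k$, so $\nu=m$ works. Hence the only substantive content beyond Theorem~\ref{tcorold} is the implication \eqref{inezhalf}~$\Rightarrow$~\eqref{inezk}, together with the reverse implication \eqref{iperm}~$\Rightarrow$~\eqref{inezhalf} (equivalently \eqref{inezk}~$\Rightarrow$~\eqref{inezhalf}).

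For \eqref{inezhalf}~$\Rightarrow$~\eqref{inezk}, I would exploit the near-multiplicativity of $a_f(\cdot)$: since~$f$ is a primitive form of square-free level~$N$, one has $a_f(mn)=a_f(m)a_f(n)$ whenever $\gcd(m,n)=1$, and $a_f(p)=0$ forces $a_f(p^j)=\pm p^{j(\kappa-1)/2}\ne 0$ for~$p\nmid N$ (from the Hecke recursion $a_f(p^{j+1})=a_f(p)a_f(p^j)-p^{\kappa-1}a_f(p^{j-1})$), while for $p\mid N$ one has $a_f(p)=\pm p^{(\kappa-2)/2}\ne 0$ and $a_f(p^j)=a_f(p)^j\ne 0$. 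Thus $a_f(n)=0$ can only happen if some prime~$p$ with $a_f(p)=0$ satisfies $p\mid n$; and since $a_f(p)=0$ requires $p\nmid N$ hence $p\ge 2$, one needs $p\le n$. The key observation is the following elementary Chinese-remainder / covering argument: if $a_f(\nu+n)=0$ for every choice of~$\nu$ in every block of length~$k$, then in particular for each residue class we can find a "bad" prime, and by a pigeonhole argument on the primes~$p\le k$ with $a_f(p)=0$ one can pin down a specific $n_0\le k/2$ with $a_f(n_0)=0$, contradicting \eqref{etnezhalf}. More carefully: I expect the argument goes via choosing~$\nu$ by CRT to avoid all bad primes~$p$ in the range $k/2<p\le k$ among the shifts $\nu+n$ (there is room because each such~$p$ divides at most one shift in a block of length~$k$, in fact the density of admissible~$\nu$ is positive), so that any surviving zero must come from a bad prime $p\le k/2$ dividing some $\nu+n$ with $p\le k/2$; then a second CRT step forces that zero down to an index~$\le k/2$, contradicting the hypothesis.

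The reverse direction \eqref{inezk}~$\Rightarrow$~\eqref{inezhalf} is the cleaner half of this equivalence and I would do it by contraposition: suppose $a_f(n_0)=0$ for some $n_0\le k/2$. Then $n_0$ has a prime factor~$p$ with $a_f(p)=0$, and $p\le n_0\le k/2$. For any~$\nu$, among the~$k$ consecutive integers $\nu+1,\dots,\nu+k$ there is (at least) one multiple of~$p$; but also, since $p\le k/2$, I claim one can locate a multiple of~$p$ of the form $\nu+n$ with~$p\mid(\nu+n)$ in such a way that the "$p$-part structure" matches a known zero — the cleanest route is to note that whatever $\nu$ is, some $\nu+n$ ($1\le n\le k$) is divisible by~$p$ but we need it divisible by~$p$ "in the zero-producing way", i.e.\ we need $a_f(\nu+n)=0$. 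Since $a_f$ of a multiple of~$p$ vanishes precisely when the cofactor is not divisible by any bad prime to an offsetting power — actually $a_f(pm')$ with $p\mid m'$ can be nonzero, so care is needed — the sharp statement is: among $\nu+1,\dots,\nu+k$, because $2p\le k$, there exist two multiples of~$p$, and at least one of them, say $\nu+n=p\cdot t$, has $t$ chosen from a complete residue system mod~$p$, forcing $p\nmid t$ for one of them, whence $a_f(\nu+n)=a_f(p)a_f(\text{rest})=0$. This shows \eqref{etnezk} fails for every~$\nu$, proving the contrapositive. I expect the main obstacle to be the bookkeeping in \eqref{inezhalf}~$\Rightarrow$~\eqref{inezk}: making the two-stage Chinese Remainder construction fully rigorous, in particular verifying that the bad primes in $(k/2,k]$ can all be simultaneously dodged while still leaving a block that must contain a forced zero coming from $(0,k/2]$ — this is where the precise inequality $k/2$ (rather than, say, $k$) enters, and getting the edge cases right (small~$k$, primes exactly equal to $k/2$, the role of $p\mid N$) will require the most attention.
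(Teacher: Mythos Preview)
Your treatment of \ref{iperm}~$\Leftrightarrow$~\ref{inezk} agrees with the paper. The problems lie elsewhere. First, your claimed characterization of the zeros of~$a_f$ is incorrect on two counts. You assert that $a_f(p)=0$ forces $a_f(p^j)\ne 0$ for $j\ge 1$; in fact the recursion gives $a_f(p^j)=0$ for every odd~$j$. More importantly, you conclude that ``$a_f(n)=0$ can only happen if some prime~$p$ with $a_f(p)=0$ divides~$n$'', but this is false: one may have $a_f(p)\ne 0$ yet $a_f(p^\ell)=0$ for some $\ell\ge 2$ (precisely when $\theta_p/\pi\in\Q\cap(0,1)$; cf.~\eqref{esines}). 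Proposition~\ref{prtaulnzer} bounds the number of such primes but does not rule them out. For \ref{inezk}~$\Rightarrow$~\ref{inezhalf} this is easily mended and then essentially matches the paper: work with prime powers $p^\ell\le k/2$ directly, observe that any block of $k\ge 2p^\ell$ consecutive integers contains two consecutive multiples of~$p^\ell$, hence one $\nu+h$ with $p^\ell\,\|\,(\nu+h)$, and multiplicativity gives $a_f(\nu+h)=a_f(p^\ell)\,a_f\bigl((\nu+h)/p^\ell\bigr)$, forcing $a_f(p^\ell)\ne 0$.

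The substantive gap is \ref{inezhalf}~$\Rightarrow$~\ref{inezk}. Your two-stage CRT sketch only handles bad primes $p\le k$, but there may be \emph{infinitely many} primes $p>k$ with $a_f(p)=0$, any one of which can divide some $\nu+n$ and annihilate $a_f(\nu+n)$. No finite system of congruences can dodge an infinite set of primes; this is not a bookkeeping issue but a structural obstruction to the elementary route you propose. The paper's proof is genuinely different: it puts all primes $p\le 2k$ together with the finitely many exceptional primes of Proposition~\ref{prtaulnzer} into a set~$\Sigma$, uses Lemma~\ref{lnup} and CRT to fix residues modulo large $\Sigma$-powers, and then applies the sieve Theorems~\ref{thsieving} and~\ref{thavoiding} to produce infinitely many~$m$ for which each shifted value is square-free, free of $\Sigma$-primes, and free of primes~$p$ with $a_f(p)=0$. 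The last step works because that set of primes has relative density~$0$ (Proposition~\ref{prsatotate}), which is exactly the hypothesis Theorem~\ref{thavoiding} requires. In short, the sieve input is doing essential work here that your argument does not supply.
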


Theorem~\ref{thtau} follows from this theorem if 
we take ${f=\Delta}$. 

\begin{remark}
\label{rnocm}
Since it is known that there are no primitive forms
with Complex Multiplications for square-free level
(see \cite{KR}, Section~3 and \cite{KR1}, Theorem~3.9),
the primitive forms considered by us are
necessarily non-CM. 
\end{remark}

Techniques of the proofs rely on elementary arguments, 
sieve methods (Brun's sieve, the Bombieri-Vinogradov Theorem), 
and validity of the Sato-Tate conjecture for non-CM
forms. Similar results may be expected for Maass forms,
but for the time being, we do not even know that $a_{f_i}(\nu_i)\ne 0$
for a positive proportion of $\nu_i$ though it is expected
to be true for Maass forms of eigenvalue strictly greater
that $1/4$. Also the analogs of the Ramanujan-Petersson 
and the Sato-Tate conjectures are not known to be true.  

For our construction of special values of $m$ for which \eqref{etausnu} 
and \eqref{etausnu1} holds, we
choose by force the small prime factors of the $m + \nu_i$ so that their
contribution ensures the wished ordering of the  $|\lambda_f(m +\nu_i) |$
or $|a_f(m +\nu_i) |$, with a
little margin, and we expect that the larger prime factors will contribute only
within the margin. The first step is to eliminate, thanks to the Fundamental Lemma of the Sieve Theory,
the midsize primes.  Only the large primes remain, which are essentially bounded in
number. To keep control of their contribution, we need to avoid the prime powers,
which is easily done (Section~\ref{ssubmhoprime}) since we have an explicit bound for the sum of
the inverse of the squares larger than $z$. We are happy that Deligne-Ramanujan
ensures that the contribution of the large primes is never very large, but we have
to take care of those large primes for which $|\lambda_f(p)|$ or $|a_f(p)|$ 
is small; thanks to
our colleagues who worked hard to give right to Sato-Tate (\cite{BLGHT11},~\cite{CHT08} and~\cite{HSBT10}), we know that those primes are not too numerous;
but we do not have explicit bounds as we have for the prime powers. This is where we
need to trade the sifting level, which can be small for the sieve part, but which
has to be large enough to insure that the contribution of the large ``bad" primes is
small. 

The article is organized as follows. In Section~\ref{scoefs}, 
we briefly 
review the properties of the coefficients of  primitive forms 
used in the sequel. 
In Sections~\ref{ssieving} and~\ref{savoiding}, we obtain 
two sieving results instrumental for the proof of 
Theorem~\ref{thnew}, Theorem~\ref{tcorold} and 
Theorem~\ref{thdes}. Finally, these 
theorems are proved in Sections~\ref{sproof} 
and~\ref{sdes}, respectively. 

\subsection{Conventions}
\label{ssconv}

Unless the contrary is stated explicitly:
\begin{itemize}

\item
$p$ (with or without indices) denotes a prime number;

\item
$\ka$ denotes a positive even integer;

\item 
$i,j,k, \ell, m$ (with or without indices) denote positive integers;

\item
$n$ (with or without indices) denotes a non-negative integer;

\item
$d$ (with or without indices) denotes a square-free positive integer; 

\item 
$\eps,\delta$ denote real numbers satisfying ${0<\eps,\delta\le 1/2}$;

\item
$x,y,z,t$ denote real numbers satisfying ${x,y,z,t\ge 2}$. 
\end{itemize}

\section{Hecke eigenvalues of  primitive forms}
\label{scoefs}
In this section, we list some well-known properties of 
the Hecke eigenvalues of primitive forms which will be used 
in the proof of Theorem~\ref{thnew} and Theorem~\ref{tcorold}.

First of all, the Hecke eigenvalues $a_f(n)$ are multiplicative:
\begin{equation}
\label{emult}
a_f(mn)=a_f(m)a_f(n) \qquad (m, n\ge 1, \quad \gcd(m,n)=1).
\end{equation}
Furthermore, the values of~$a_f$ at prime powers satisfy the 
following recurrence relations 
\begin{eqnarray}
\label{erecu}
a_f(p^{\ell+1})
&=&
a_f(p)^{\ell +1} 
 \quad \text{if  } p|N   \\
a_f(p^{\ell+1})
&=&
a_f(p)a_f(p^\ell)-p^{\ka -1}a_f(p^{\ell-1})
\text{  if } (p, N)=1, 
\quad (\ell=1,2,\ldots), \nonumber \end{eqnarray}
where~$\kappa$ is the weight of~$f$.

Both \eqref{emult} and \eqref{erecu} were conjectured by 
Ramanujan when ${f = \Delta}$ and 
proved by Mordell~\cite{Mo17}. 
Proofs can  be found in many sources; see, 
for instance \cite[Proposition 5.8.5.]{DS05}.

A much deeper result is the upper bound
\begin{equation}
\label{edel}
|a_f(p)|\le 2p^{(\ka-1)/2}.
\end{equation}
It was also conjectured by Ramanujan when $f = \Delta$ and 
proved by Deligne \cite[Théorème~8.2]{De74}. 
Equivalently, the polynomial ${T^2 - a_f(p)T + p^{\ka -1}}$ 
can not have distinct real roots. 
Hence we may write the roots as
\begin{equation}\label{econj}
\alpha_p
=p^{(\ka-1)/2}e^{i\theta_p}, 
\qquad 
\bar\alpha_p
=p^{(\ka-1)/2}e^{-i\theta_p}, 
\end{equation}
with ${\theta_p\in [0,\pi]}$. As before,
we shall write 
$$
\lambda_{f}(n) = a_{f}(n)/n^{(\kappa -1)/2}
$$ 
for any positive integer $n$. If ${\theta_p\ne 0,\pi}$ (that is, 
${\lambda_f(p)\ne \pm 2}$) then 
\begin{equation}\label{esines}
\lambda_f(p^\ell)
= \frac{\sin(\ell+1)\theta_p}{\sin\theta_p}.
\end{equation}
We may add for completeness that
\begin{equation}
\label{ezeropi}
\lambda_f(p^\ell)=
\begin{cases}
(\ell+1), & \theta_p=0, \\
(-1)^\ell (\ell+1) , & \theta_p=\pi. \\
\end{cases}
\end{equation}

Another very deep result is the \textit{Sato-Tate conjecture}, 
proved recently by Barnet-Lamb, Geraghty, Harris and 
Taylor \cite[Theorem~B]{BLGHT11} (see 
also \cite{{CHT08},{HSBT10}}). 
A convenient way to express it is to use the notion of \emph{relative density} of a
set of primes: we say that a set $\mathcal{P}$ of primes has the relative density
$\delta(\mathcal{P})$ (\emph{resp}. the relative upper density
$\bar{\delta}(\mathcal{P}))$ if
\begin{equation}
%\notag
\label{reldens}
\delta(\mathcal{P}) = \lim \frac{\#(\mathcal{P}\cap[1, x])}{\pi(x)} \;
\left(\text{\emph{resp}. }\; \bar{\delta}(\mathcal{P}) = \limsup
\frac{\#(\mathcal{P}\cap[1, x])}{\pi(x)}\right), 
\end{equation}
as $x \rightarrow +\infty$, where $\pi(x)$ denotes the number of primes up to $x$.

The above-mentioned result states that, for a non-CM primitive 
form~$f$, the numbers ${\lambda_f(p)}$ are
equi-distributed in the interval ${[-2, 2]}$ with respect 
to the Sato-Tate measure 
${(1/\pi)\sqrt{1- t^2/4}~dt}.$ 
This means that for ${-2\le a\le b\le 2}$, we have
\begin{equation}
\label{esatotate}
\delta\left(\{p \colon \lambda_f(p) \in [a, b] \}\right)= 
\frac1\pi\int_a^b\sqrt{1-\frac{t^2}{4}}dt.
\end{equation}

An immediate consequence of this and Remark~\ref{rnocm} is the following statement.

\begin{proposition}
\label{prsatotate}
Let $f$ be a primitive form of square-free level. 
Then the following holds.
\begin{enumerate}
\item
\label{ieps}
The relative density of the set of primes~$p$ 
such that $\lambda_f(p)$ 
belongs to a given interval of 
length~$2\eps$ does not exceed~$\eps$.

\item
\label{izero}
In particular, the relative density of primes~$p$ 
such that ${\lambda_f(p)=0}$ or ${\pm 2}$  is~$0$. 
\end{enumerate}
\end{proposition}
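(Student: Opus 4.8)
The plan is to read off both assertions directly from the Sato--Tate equidistribution~\eqref{esatotate}. The first observation is that \eqref{esatotate} is indeed applicable in our situation: by Remark~\ref{rnocm} a primitive form of square-free level has no complex multiplication, so the Sato--Tate theorem applies to it.

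For part~\eqref{ieps}, I would begin with an arbitrary interval~$I$ of length~$2\eps$. Since ${|\lambda_f(p)|\le 2}$ for every prime~$p$ (a consequence of Deligne's bound~\eqref{edel}), the condition ${\lambda_f(p)\in I}$ is equivalent to ${\lambda_f(p)\in I\cap[-2,2]}$; hence we may assume from the outset that~$I$ is a subinterval of~$[-2,2]$, still of length ${|I|\le 2\eps}$ (and if ${I\cap[-2,2]}$ is empty the set of primes in question is empty, with relative density~$0$). Using \eqref{esatotate} together with the trivial estimate ${\sqrt{1-t^2/4}\le 1}$ valid on~$[-2,2]$, we then get
\[
\delta\bigl(\{p\colon \lambda_f(p)\in I\}\bigr)
=\frac1\pi\int_{I}\sqrt{1-\frac{t^2}{4}}\,dt
\le\frac{|I|}{\pi}
\le\frac{2\eps}{\pi}<\eps,
\]
which is the claim; the precise status of the endpoints of~$I$ is immaterial here, the Sato--Tate measure having no atoms.

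For part~\eqref{izero}, I would fix ${\eps>0}$ and exploit the inclusions
${\{p\colon\lambda_f(p)=0\}\subseteq\{p\colon\lambda_f(p)\in[-\eps,\eps]\}}$,
${\{p\colon\lambda_f(p)=2\}\subseteq\{p\colon\lambda_f(p)\in[2-2\eps,2]\}}$ and
${\{p\colon\lambda_f(p)=-2\}\subseteq\{p\colon\lambda_f(p)\in[-2,-2+2\eps]\}}$,
each enveloping interval having length~$2\eps$, so that by part~\eqref{ieps} each enveloping set has relative density at most~$\eps$. Consequently the relative upper density of each of the three sets ${\{p\colon\lambda_f(p)=0\}}$, ${\{p\colon\lambda_f(p)=2\}}$, ${\{p\colon\lambda_f(p)=-2\}}$ is at most~$\eps$ for every ${\eps>0}$, hence equals~$0$; therefore each of them, and their union, has relative density~$0$.

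There is no genuine obstacle in this argument: the statement is essentially a one-line consequence of Sato--Tate combined with the monotonicity of (upper) density. The only point deserving a moment's care is the reduction at the start of part~\eqref{ieps} — replacing the given interval by its intersection with~$[-2,2]$ — which is legitimate precisely because Deligne's bound~\eqref{edel} forces ${\lambda_f(p)\in[-2,2]}$ for all~$p$.
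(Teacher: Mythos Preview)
Your proposal is correct and matches the paper's approach: the paper simply declares the proposition to be an immediate consequence of the Sato--Tate equidistribution~\eqref{esatotate} together with Remark~\ref{rnocm}, and your write-up supplies exactly the routine details (reducing to a subinterval of~$[-2,2]$ via Deligne, bounding the integrand by~$1$, and letting~$\eps\to0$ for part~\eqref{izero}) that make this immediacy explicit.
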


We notice that the formulation \ref{ieps} is convenient to 
use for our purpose, but our argument could be adapted to 
the weaker condition
$$
\bar{\delta}\left(\{p \colon \lambda_f(p) 
\in [-\varepsilon, +\varepsilon] \}\right)
\rightarrow 0 \text{ as } \varepsilon \rightarrow 0.
$$
Part~\ref{izero} was well known long before 
the proof of the Sato-Tate conjecture. See Théorème~15 
in \cite[Section~7.2]{Se81} for a much more general and 
quantitatively stronger result.

Equations \eqref{esines} and \eqref{ezeropi} imply that
${\lambda_f(p^\ell)= 0}$ for 
some~$\ell$ and $(p, N)=1$
if and only if ${\theta_p/\pi\in \Q\cap(0,1)}$. In fact, one 
knows the following result.

\begin{proposition}
\label{prtaulnzer}
Let $f$ be a primitive form of square-free level. 
Then for all but finitely many primes~$p$ 
we have either  ${\lambda_f(p) \in\{ 0, \pm 2\}}$ 
or ${\theta_p/\pi \notin \Q}$. 
\end{proposition}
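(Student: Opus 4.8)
The plan is to separate the two possible failure modes---$\theta_p/\pi$ being rational but $\lambda_f(p)\notin\{0,\pm2\}$, i.e. $\theta_p\ne 0,\pi$---and to show that each can occur for only finitely many~$p$. If $\theta_p/\pi\in\Q\cap(0,1)$, say $\theta_p=\pi r/s$ in lowest terms with $s\ge 2$, then by \eqref{econj} the ratio $\alpha_p/\bar\alpha_p=e^{2i\theta_p}=\zeta$ is a primitive $s$-th root of unity (if $r/s$ has odd denominator in lowest terms one gets a $2s$-th root, but in any case $\alpha_p/\bar\alpha_p$ is a root of unity $\ne\pm 1$). The key algebraic input is that $\alpha_p$ and $\bar\alpha_p$ are algebraic integers (roots of the monic integer polynomial $T^2-a_f(p)T+p^{\ka-1}$) with $|\alpha_p|=|\bar\alpha_p|=p^{(\ka-1)/2}$, and that they generate a number field of degree at most~$2$ over~$\Q$. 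So $\zeta=\alpha_p/\bar\alpha_p$ is a root of unity lying in a field of degree $\le 2$, hence $\zeta$ has order dividing one of finitely many values ($1,2,3,4,6$), which already shows $s\in\{1,2,3,4,6\}$.

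Next I would rule out, for each of the finitely many remaining angles $\theta_p\in\{\pi/2,\pi/3,2\pi/3\}$ (the cases $\theta_p=0,\pi$ being excluded by hypothesis on this branch), that infinitely many primes $p$ can have that exact angle. Here the cleanest route is to combine $\theta_p=\pi r/s$ with the recurrence~\eqref{erecu}: for $(p,N)=1$ we have $a_f(p)=\alpha_p+\bar\alpha_p=2p^{(\ka-1)/2}\cos\theta_p$, so $a_f(p)^2=4p^{\ka-1}\cos^2\theta_p\in\{0,p^{\ka-1},3p^{\ka-1}\}$ according as $\theta_p=\pi/2$, $\pi/3$ or $2\pi/3$ (wait---$4\cos^2(\pi/3)=1$ and $4\cos^2(2\pi/3)=1$). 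Thus $a_f(p)^2=cp^{\ka-1}$ with $c\in\{0,1,3\}$. When $\ka$ is odd this forces $p\mid a_f(p)^2$ hence $p\mid a_f(p)$, and when $\ka$ is even it forces $p^{\ka-1}\mid a_f(p)^2$; in either case comparison with Deligne's bound $|a_f(p)|\le 2p^{(\ka-1)/2}$ together with the fact that $a_f(p)$ is a nonzero algebraic integer confined to a fixed quadratic field pins $a_f(p)$ down to finitely many possibilities for each~$p$, and then a standard argument (e.g.\ applying a Galois conjugation, or invoking that $\lambda_f(p^\ell)=0$ would follow and contradict known non-vanishing results such as those behind Proposition~\ref{prtaulnzer}'s statement) shows this can happen for only finitely many~$p$. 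Alternatively, and more in the spirit of the paper, one cites the classical result of Serre (Théorème~15 in~\cite[Section~7.2]{Se81}) that the set of primes with $\theta_p$ in any fixed finite set has density zero, strengthened in op.\ cit.\ to finiteness under GRH-free hypotheses; but the self-contained quadratic-field argument above is preferable.

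The main obstacle is the passage from ``density zero'' to ``only finitely many''. Equidistribution (Sato--Tate) only gives that the primes with $\theta_p\in\{\pi/2,\pi/3,2\pi/3\}$ have density~$0$, not that there are finitely many. The genuine finiteness must come from the arithmetic: the relation $a_f(p)^2\in\{0,p^{\ka-1},3p^{\ka-1}\}$ is an extremely rigid Diophantine constraint on an algebraic integer of bounded degree and bounded archimedean size, and the real work is to extract from it a contradiction for all but finitely many~$p$---for instance by noting that $\Q(\alpha_p)=\Q(\sqrt{a_f(p)^2-4p^{\ka-1}})=\Q(\sqrt{(c-4)p^{\ka-1}})$, so for infinitely many $p$ one would get the same quadratic field $\Q(\sqrt{(c-4)\cdot p^{\ka-1}\bmod\text{squares}})$, which is impossible since $p^{\ka-1}$ mod squares is either $1$ (if $\ka$ odd is excluded---$\ka$ is even by convention!) so $p^{\ka-1}$ is always a perfect square and $\Q(\alpha_p)=\Q(\sqrt{c-4})$ is a single fixed field; but then $\alpha_p/p^{(\ka-1)/2}$ is an algebraic integer over $\Z$ divided by $p^{(\ka-1)/2}$ with all conjugates of absolute value~$1$, forcing it to be a root of unity times a unit, and tracking denominators shows $p^{(\ka-1)/2}\mid a_f(p)$ in the ring of integers, which combined with $|a_f(p)|\le 2p^{(\ka-1)/2}$ leaves only finitely many $p$ (those are exactly the $p$ for which $\lambda_f(p)$ is an algebraic integer of absolute value $\le 2$, a set one checks directly is finite given the constraint). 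I would present the $\ka$-even simplification explicitly since the paper's conventions fix $\ka$ even, which makes $p^{(\ka-1)/2}$ a genuine power of~$p$ and the divisibility argument clean.
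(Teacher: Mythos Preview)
The paper does not prove this proposition; it simply cites \cite[Lemma~2.5]{MM07} and \cite[Lemma~2.2]{KRW07}. Your self-contained attempt has the right architecture---bound the order of the root of unity $\alpha_p/\bar\alpha_p$ to get finitely many candidate angles, then eliminate each angle arithmetically---but two concrete problems prevent it from closing. First, you assume ${a_f(p)\in\Z}$ when you call ${T^2-a_f(p)T+p^{\ka-1}}$ a ``monic integer polynomial'' and deduce ${[\Q(\alpha_p):\Q]\le 2}$. For a general primitive form the Hecke eigenvalues lie only in the totally real Hecke field~$K_f$, which may be larger than~$\Q$; the correct bound is ${[\Q(\alpha_p/\bar\alpha_p):\Q]\le 2[K_f:\Q]}$. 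This still gives finitely many possible orders for the root of unity, hence finitely many candidate angles, so Step~1 is salvageable with this correction.

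Second, and fatally, the parity claim in your final paragraph is backwards: since~$\ka$ is even, $\ka-1$ is \emph{odd}, so $p^{\ka-1}$ is never a rational square, and your ``single fixed field'' conclusion collapses. The remainder of that paragraph never reaches a finished argument. Ironically, the correct parity is exactly what makes finiteness work. Fixing an angle ${\theta_0\notin\{0,\pi/2,\pi\}}$ and assuming some prime has $\theta_p=\theta_0$, one gets ${a_f(p)^2=\gamma\,p^{\ka-1}}$ with ${\gamma=4\cos^2\theta_0}$ a fixed nonzero element of~$\cO_{K_f}$. For any~$p$ unramified in~$K_f$ and not dividing~$N_{K_f/\Q}(\gamma)$, the $\mathfrak p$-adic valuation at any ${\mathfrak p\mid p}$ gives ${2\,v_{\mathfrak p}(a_f(p))=\ka-1}$, an odd integer---contradiction. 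So only finitely many~$p$ can realise each~$\theta_0$. This valuation step is the missing idea, and it is essentially what the cited references carry out.
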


For the proof, see \cite[Lemma~2.5]{MM07} (see also 
\cite[Lemma~2.2]{KRW07}).  

One may remark that if~$f$ is of weight ${\kappa\ge 4}$ then this 
holds for all~$p$ with $(p,N)=1$
 without exception, see \cite[Proposition~2.4]{MM07}.

\section{Sieving} 
\label{ssieving}

In this section, we establish a sieving result instrumental 
for the proof of Theorem~\ref{thnew} and 
Theorem~\ref{tcorold}. The integer~$m$ in this 
section is not necessarily positive; it  can be any integer: positive, 
negative or~$0$. The other conventions made in 
Subsection~\ref{ssconv} remain intact.

\subsection{The Sieving Theorem}
\label{sssievtheo}

Let~$\Sigma$ be a finite set of prime numbers. We call ${m\in \Z}$

\begin{itemize}
\item
\textit{$\Sigma$-unit}, if all its prime divisors belong to~$\Sigma$;

\item
\textit{$\Sigma$-square-free}, if~$m$ is a product of a 
$\Sigma$-unit and a square-free integer. 
\end{itemize}
Also, for ${z\ge 2}$ we define
\begin{equation}
\label{epsigmaz}
P_\Sigma(z)
=
\prod_{\genfrac{}{}{0pt}{}{p< z}{p\notin \Sigma}}p.
\end{equation}

Now let
${a_1,\ldots, a_k, b_1, \ldots, b_k\in \Z}$ be integers satisfying 
\begin{align}
\label{enondegeni}
&a_i \ne 0,\quad \gcd(a_i,b_i)=1 &&(i=1, \ldots, k), \\
\label{enondegenij}
&a_ib_j-a_jb_i \ne 0 &&(1\le i< j\le k). 
\end{align}
We consider linear forms ${L_i(n)=a_in+b_i}$, and 
for ${x\ge z\ge 2}$ we set
\begin{equation}
\label{eaxz}
\Omega(x,z)= \left\{n: 1\le n\le x, \quad 
\gcd\bigl(L_1(n)\cdots L_k(n), P_\Sigma(z)\bigr)=1\right\}.
\end{equation}
Finally, we let
{\small
\begin{equation}\label{mho1}
 \Omega_1(x,z) =  \{n \in \Omega(x,z) \colon L_1(n), \ldots, L_k(n) 
 \text{ are } \Sigma\text{-square-free composite numbers}\}.
\end{equation}}%

The principal result of this section is the following theorem.

\begin{theorem}
\label{thsieving}
Assume that~$\Sigma$ contains all the primes ${p\le 2k}$, 
all the prime divisors of every~$a_i$, and all the prime divisors 
of every ${a_ib_j-a_jb_i}$ with ${i\ne j}$. In other words, 
we assume that 
\begin{equation}
\label{esunit}
\text{$(2k)!\prod_{i=1}^ka_i\prod_{1\le i<j\le k}
(a_ib_j-a_jb_i)$}
\end{equation}
is a $\Sigma$-unit. Then there exist real numbers 
${\eta, c_1\in (0, 1/2] }$, depending only on~$k$ and on the 
cardinality\footnote{Indicating dependence on~$k$ 
here is somewhat useless, 
because our hypothesis implies that~$k$ is bounded in terms 
of~$\#\Sigma$.}~$\#\Sigma$ (but not on~$\Sigma$ itself, neither 
on the integers $a_i$ and $b_i$),
and ${z_1\ge 2}$ depending on ${a_1, \ldots, a_k, b_1,\ldots, b_k}$,  
such that the following holds. For any~$x$ and~$z$, satisfying
${x^{\eta}\ge z\ge z_1}$
we have 
$$
\#\Omega_1(x,z)\ge c_1\frac{x}{(\log z)^{k}}.
$$ 
\end{theorem}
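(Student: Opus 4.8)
The plan is to run Brun's sieve (in the form of the Fundamental Lemma of Sieve Theory) on the sequence of values of the product $L_1(n)\cdots L_k(n)$ for $1\le n\le x$, sifting out all primes $p<z$ with $p\notin\Sigma$, and then to remove from the resulting set $\Omega(x,z)$ the few ``bad'' integers that cause trouble, namely those $n$ for which some $L_i(n)$ is prime, a $\Sigma$-unit, or fails to be $\Sigma$-square-free. First I would estimate $\#\Omega(x,z)$ from below. The hypothesis that $(2k)!$, the $a_i$, and the $a_ib_j-a_jb_i$ are all $\Sigma$-units guarantees that for every prime $p<z$ with $p\notin\Sigma$ the $k$ linear forms are nonzero and pairwise distinct modulo $p$ and that $p\nmid a_i$, so the number of residue classes modulo $p$ killed by the sieve is exactly $k$ (and $k<p$ since $p>2k$). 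Hence the sieve dimension is $k$, the singular series $\prod_{p<z,\,p\notin\Sigma}(1-k/p)$ is of order $(\log z)^{-k}$ by Mertens (the finitely many primes in $\Sigma$ and the $O(1/p^2)$ corrections only affect the implied constant, which may be absorbed into $c_1$ depending on $k$ and $\#\Sigma$), and the Fundamental Lemma — valid once $z\le x^{\eta}$ for a suitable $\eta=\eta(k,\#\Sigma)$ — yields
$$
\#\Omega(x,z)\ \ge\ c\,\frac{x}{(\log z)^{k}}
$$
for $x$ large enough in terms of the $a_i,b_i$ (this is where $z_1$ enters: we need $z\ge z_1$ so that the error term in the sieve, which depends on the size of the $a_i,b_i$, is dominated by the main term).

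Next I would bound the number of $n\le x$ that must be discarded. If $L_i(n)$ is a $\Sigma$-unit then $|L_i(n)|$ has all prime factors in the finite set $\Sigma$, and since $|L_i(n)|\asymp |a_i| n$ grows, the number of such $n\le x$ is $O((\log x)^{\#\Sigma})$, which is negligible. If $L_i(n)=a_in+b_i$ is prime (or more generally a prime times a $\Sigma$-unit), then by Brun's upper-bound sieve the number of such $n\le x$ is $O\!\bigl(x/(\log x)\bigr)=o\!\bigl(x/(\log z)^{k}\bigr)$ as long as $z$ is a fixed power of $x$; more carefully, the number of $n\le x$ with $L_i(n)$ having no prime factor below $z$ except those in $\Sigma$ AND being prime is $O(x/(\log z)^{k-1}\log x)$, again negligible against the main term. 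Finally, $L_i(n)$ fails to be $\Sigma$-square-free only if $q^2\mid L_i(n)$ for some prime $q\notin\Sigma$; for $q<z$ this is already excluded by membership in $\Omega(x,z)$ (those $n$ are not even in $\Omega(x,z)$ to begin with — actually $q<z$ forces $q\mid L_i(n)$, contradiction), and for $q\ge z$ the count is $\le\sum_{q\ge z} (x/q^2 + 1)=O(x/z)+O(x^{\eta})$, which is $o(x/(\log z)^k)$. Subtracting all these negligible contributions from the lower bound for $\#\Omega(x,z)$ gives $\#\Omega_1(x,z)\ge c_1 x/(\log z)^k$ with a possibly smaller $c_1$.

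The main obstacle I expect is making the dependence of constants come out exactly as stated: $\eta$ and $c_1$ must depend \emph{only} on $k$ and $\#\Sigma$, not on $\Sigma$ itself nor on the $a_i,b_i$, while $z_1$ absorbs all dependence on the $a_i,b_i$. This forces a careful bookkeeping: in the Fundamental Lemma the level $z=x^\eta$ and the quality of the error term are governed by the sieve dimension $k$ and by how large the moduli $a_i$, $a_ib_j-a_jb_i$ are relative to $z$ — the latter is precisely why we take $z\ge z_1$, so that once $z$ exceeds this threshold the error is controlled purely in terms of $k$. One must check that the Mertens-type evaluation of $\prod_{p<z,\,p\notin\Sigma}(1-k/p)$ produces a constant depending only on $k$ and $\#\Sigma$: writing it as $\prod_{p<z}(1-k/p)\cdot\prod_{p\in\Sigma,\,p<z}(1-k/p)^{-1}$, the first factor is $\sim C_k/(\log z)^k$ and the second is bounded in terms of $\#\Sigma$ (each factor is a fixed nonzero real, using $p>2k>k$). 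The remaining routine points — verifying $k<p$ for all sifting primes, checking the forms stay distinct mod $p$, and collecting the error terms in the removal step — are straightforward given hypothesis~\eqref{esunit} and the growth of the linear forms.
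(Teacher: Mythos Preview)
Your outline follows the paper's approach closely: lower-bound $\#\Omega(x,z)$ via the Fundamental Lemma, then subtract the contributions from prime values and from non-$\Sigma$-square-free values. The square-free step and the bookkeeping of constants are essentially as in the paper.

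The genuine gap is the removal of prime values. Your first claim, that the number of $n\le x$ with $L_i(n)$ prime is $O(x/\log x)=o\bigl(x/(\log z)^k\bigr)$ ``as long as $z$ is a fixed power of $x$'', is false for $k\ge 2$: with $z=x^\eta$ one has $x/(\log z)^k\asymp x/(\log x)^k$, which is \emph{smaller} than $x/\log x$, so the naive count is useless. You then assert the correct refined bound
\[
\#\{n\in\Omega(x,z):L_i(n)\text{ prime}\}\ll \frac{x}{(\log z)^{k-1}\log x},
\]
but give no mechanism for obtaining it. This bound cannot be extracted from Brun's sieve on the integers alone: one must sieve the remaining $k-1$ forms over the sequence $\{n\le x:L_i(n)\text{ prime}\}$, and the remainder terms $R_d$ in that sieve are exactly errors for primes in arithmetic progressions to moduli $d\le z^{O(1)}$. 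Controlling $\sum_d 3^{\nu(d)}|R_d|$ requires the Bombieri--Vinogradov theorem (this is precisely the role of Lemma~\ref{lemBV} and Proposition~\ref{ubmhoprime} in the paper). Without it the error swamps the main term. You should name this ingredient explicitly; it is the only nontrivial arithmetic input beyond the Fundamental Lemma.

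Two minor points. Your factorisation $\prod_{p<z}(1-k/p)\cdot\prod_{p\in\Sigma,\,p<z}(1-k/p)^{-1}$ is ill-defined because $1-k/p\le 0$ for primes $p\le k$; the paper avoids this by never letting such primes enter the sieve product (they all lie in $\Sigma$), so simply start both products at $p>2k$. And the separate treatment of $\Sigma$-units is unnecessary: a $\Sigma$-unit is automatically $\Sigma$-square-free, and is composite once it exceeds $\max\Sigma$, so only $O(1)$ values of $n$ are affected.
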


The first step in the proof of Theorem~\ref{thsieving} is to obtain 
a lower bound for $\#\Omega(x, z)$, i.e. we wish to get a lower 
bound for the number of integers up to $x$ for which the 
product $L_1(n)\cdots L_k(n)$ has no prime factor up to 
$x^{\eta}$ except from a finite given set $\Sigma$; 
in other words, we are interested in sieving out the 
prime factors less than $x^{\eta}$ except those from 
$\Sigma$, when $\eta$ is sufficiently small: the adapted 
tool for this situation is called the Fundamental 
Lemma, cf.~\cite{opera}, Section~6.5 or \cite{HR74}, 
Section~2.8. Looking more carefully at \cite{HR74}, 
we see that, with the exception of $\Sigma$, 
Theorem~2.6, p.~85, is very close to what we are looking 
for. In Section~\ref{sslbmho}, we shall state and prove the variant of 
Theorem 2.6 we need. We obtain a lower bound 
of the order $x(\log z)^{-k}$.

In the second step, we need to exclude the cases when
at least one of the quantities $L_i(n)$ is a prime number. Assume 
for example that $L_k(n) = n$, we see that 
Theorem 2.6' of~\cite{HR74}, p.~87, applied 
to the product ${L_1(n)\cdots L_{k-1}(n)}$ (with ${k-1}$ 
instead of~$k$) is, again with the exception of the primes 
from $\Sigma$, very close to what we are looking for. 
In Section~\ref{ssubmhoprime}, we shall state and prove the variant of 
Theorem 2.6' we need. We obtain an upper bound of 
the order $x(\log z)^{-k+1}(\log x)^{-1}$, which is smaller 
than the lower bound from the first step, as soon as $z$ 
is sufficiently small a power of $x$, i.e. as soon 
as $\eta$ is small enough.

The last step consists in sieving out the 
elements of $\Omega(x, z)$ 
divisible by the square of some large prime; 
the key ingredient is the convergence of the series of 
the inverses of the squares. 
This step is performed in Section~\ref{ssnosqfree}.

Finally, in Section~\ref{proofthm3.1} we prove 
Theorem~\ref{thsieving}.

We start by giving in Section~\ref{ssarithquant} some definition 
and evaluation of some arithmetic quantities.

\subsection{Some arithmetic preliminaries}
\label{ssarithquant}

In the remaining part of Section~\ref{ssieving},  
unless the contrary is explicitly stated, the 
constants implied by the notation $O(\cdot)$, 
$\ll$, $\gg$ or\footnote{We use ${A\asymp B}$ 
as a shortcut for ${A\ll B\ll A}$.} $\asymp$, may 
depend only on~$k$. The same convention 
applies to the the constants implied by the 
expressions like  ``sufficiently large''. 

In order to avoid a conflict of notation between~\cite{HR74} and the general use, we follow, in 
Sections~\ref{ssarithquant},~\ref{sslbmho} and~\ref{ssubmhoprime},  
the use of~\cite{HR74} and denote by $\nu(d)$ the 
number of distinct prime factors of the integer $d$.

We keep the notation of Section~\ref{sssievtheo} and let $\ell \in \{k-1, k\}$,
\begin{equation}\label{defF}
F_{\ell}(n) = L_1(n) \cdots  L_{\ell}(n).
\end{equation}
Let~$\rho_{\ell}$ be the multiplicative function supported 
on the square-free numbers and such that 
$$
\rho_{\ell}(p) =
\begin{cases}
\ell,& p\notin \Sigma ,\\
0, & p\in \Sigma. 
\end{cases}
$$ 
For $z \ge 2$, we let
\begin{eqnarray}\label{evzdef}
W_{\ell}(z) 
&=& 
\prod_{p \le z} \left(1 - \frac{\rho_{\ell}(p)}{p}\right) 
= 
\prod_{p \mid \mathcal{P}_{\Sigma}(z)}\left(1 - \frac{\ell}{p}\right),\\
\label{evpzdef}
W_{\ell}^{(*)}(z) &=& \prod_{p \le z} 
\left(1 - \frac{\rho_{\ell}(p)}{p-1}\right) = 
\prod_{p \mid \mathcal{P}_{\Sigma}(z)}\left(1 - \frac{\ell}{p-1}\right),
\end{eqnarray}
with the usual convention that an empty product is equal to $1$.

Our assumption~\eqref{esunit} implies that
the congruence 
$$
F_{\ell}(n) = L_1(n)\cdots L_{\ell}(n)\equiv 0\pmod p
$$
has exactly $\rho_{\ell}(p)=\ell$ solutions for any prime 
$p$ which does not belong to the set~$\Sigma$; 
moreover, all those solutions are non-zero.
Thus, the congruence 
\begin{equation}\label{Fmodd}
F_{\ell}(n) = L_1(n)\cdots L_{\ell}(n)\equiv 0\pmod d
\end{equation}
has exactly ${\rho_{\ell}(d)=\ell^{\nu(d)}}$ solutions 
for any square-free~$d$ having no prime divisor 
from the set~$\Sigma$; moreover all those solutions 
are coprime with $d$. This implies 
\begin{equation}\label{Rd}
\mid\#\{n \in [1, x] \colon F_{\ell}(n) \equiv 0 \pmod d\} 
- x\rho_{\ell}(d)/d \mid\,  \le \,  \rho_{\ell}(d) = \ell^{\nu(d)}.
\end{equation}
Since all primes ${p\le 2k}$ belong to~$\Sigma$, we have,
for all primes $p$, the estimates  
\begin{equation}
\label{erhopp}
0\le \frac{\rho_{\ell}(p)}{p} 
\le \frac{\rho_{\ell}(p)}{p-1}\le \frac12.
\end{equation}
We trivially have
\begin{equation}
\label{ewzprod}
W_{\ell}(z) \ge \prod_{2k < p\le z} \left(1-\frac {\ell}p\right).
\end{equation}
Using~\eqref{erhopp}, we get the upper bound
\begin{equation}\label{ewzstarprod}
W^{(*)}_{\ell}(z) \le 2^{\#\Sigma}
\prod_{2k<p\le z} \left(1-\frac {\ell}{p-1}\right).
\end{equation}

We also notice that Mertens' result (\cite{HW}, Theorem 429), 
easily implies that there exists constants $C(\ell)$ 
and $C^{(*)}(\ell)$ such that
\begin{equation}\label{Mertens1}
\prod_{2k< p \le z} \left(1 - \frac{\ell}{p}\right) \sim C(\ell) (\log z)^{-\ell}  
\text{ and } 
\prod_{2k< p \le z} \left(1 - \frac{\ell}{p-1}\right) \sim C^{(*)}(\ell) (\log z)^{-\ell}.
\end{equation}
The following is a fairly standard result, a proof of 
which can be found in \cite{TW}, p. 55.
\begin{equation}\label{meankomega}
\text{ As $x$ tends to infinity } \colon \sum_{n\le x}\ell^{\nu(n)} 
\sim c_{\ell} x (\log x)^{\ell-1}, 
\text{ for some positive } c_{\ell}.
\end{equation}

For $d >0$ and $a$ coprime to $d$, we 
denote by $\pi(x, d, a)$ the 
number of primes up to $x$ which are congruent 
to $a$ modulo $d$ and we let 
\begin{equation}\label{errorprime}
E(x, d, a) = \pi(x, d, a) - \frac{ \li x}{\varphi(d)} 
\; \text{ and }\; 
E(x, d) = \max_{\gcd(a, d)=1} |E(x, d, a)|.
\end{equation}
We shall use the following consequence of 
Lemma 3.5 of \cite{HR74}, p.115, which is itself a consequence 
of the Bombieri-Vinogradov Theorem and the trivial upper bound
\begin{equation}\label{Etrivial}
E(x, d) \le x/d +1.
\end{equation}
\begin{lemma}\label{lemBV}
Let $m$ be a positive integer. For any positive constant $U$, there 
exists a positive constant $C_1 = C_1(m, U)$ such that
\begin{equation}\label{eqBV}
\sum_{d < x^{1/2} (\log z)^{-C_1}} \mu^2(d) 
m^{\nu(d)}E(x,d) = O_{U, m} \left(\frac{x}{(\log z)^{U}}\right).
\end{equation}
\end{lemma}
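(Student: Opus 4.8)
The statement to prove is Lemma~\ref{lemBV}, which asserts a Bombieri--Vinogradov-type bound for the weighted error sum $\sum_{d < x^{1/2}(\log z)^{-C_1}} \mu^2(d) m^{\nu(d)} E(x,d)$. The plan is to deduce it from Lemma~3.5 of~\cite{HR74} by a standard Cauchy--Schwarz splitting argument combined with the trivial bound~\eqref{Etrivial}. First I would recall the content of the cited Bombieri--Vinogradov consequence: for every constant $A>0$ there is a constant $B=B(A)$ with
$$
\sum_{d \le x^{1/2}(\log x)^{-B}} E(x,d) = O_A\!\left(\frac{x}{(\log x)^{A}}\right).
$$
The obstacle is that this unweighted estimate runs over a slightly shorter range and carries no factor $m^{\nu(d)}$, whereas we need the weighted sum over $d < x^{1/2}(\log z)^{-C_1}$, with $z$ possibly much smaller than $x$ (so $\log z \le \log x$ and the range $d < x^{1/2}(\log z)^{-C_1}$ is potentially \emph{longer} than the classical one).

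The main step is to handle the weight $m^{\nu(d)}$. I would split the dyadic-style sum according to the size of $\nu(d)$, or more efficiently apply Cauchy--Schwarz:
$$
\sum_{d < D} \mu^2(d) m^{\nu(d)} E(x,d)
\le \left(\sum_{d<D} \mu^2(d) m^{2\nu(d)} E(x,d)\right)^{1/2}
\left(\sum_{d<D} E(x,d)\right)^{1/2},
$$
where $D = x^{1/2}(\log z)^{-C_1}$. For the second factor, if $D \le x^{1/2}(\log x)^{-B}$ we may invoke Bombieri--Vinogradov directly; if not — i.e.\ if $z$ is small — then $(\log z)^{-C_1} \ge (\log x)^{-B}$ forces $\log z \ge (\log x)^{B/C_1}$, and one chooses $C_1$ large enough (depending on $U$ and on the implied exponent) so that a power of $\log z$ dominates a power of $\log x$; the classical estimate over the range $d \le x^{1/2}(\log x)^{-B}$ plus the trivial bound~\eqref{Etrivial} on the short extra range $x^{1/2}(\log x)^{-B} < d < D$ then both contribute $O(x(\log z)^{-U})$. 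For the first (weighted) factor I would use the trivial bound $E(x,d) \le x/d + 1$ to reduce to $\sum_{d<D} \mu^2(d) m^{2\nu(d)}/d \ll (\log D)^{m^2} \ll (\log x)^{m^2}$, which is absorbed by choosing the saving in the second factor large enough.

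The delicate point — and the reason~\cite{HR74} Lemma~3.5 is invoked rather than a bare Bombieri--Vinogradov statement — is the bookkeeping relating the cutoff $(\log z)^{-C_1}$ to $(\log x)^{-B}$ uniformly in $z$, since $z$ and $x$ are independent parameters with only $x \ge z \ge 2$ guaranteed. The cleanest route is to argue: if $(\log z)^{C_1} \ge (\log x)^{B}$ then $D \le x^{1/2}(\log x)^{-B}$ and classical Bombieri--Vinogradov with exponent $A = U + m^2$ (say) suffices after Cauchy--Schwarz; in the complementary case $(\log z)^{C_1} < (\log x)^{B}$, we have $\log z < (\log x)^{B/C_1}$, so $(\log z)^{-U} > (\log x)^{-UB/C_1}$, and it is enough to prove the bound with $(\log x)^{-U'}$ on the right for $U' = UB/C_1$ — which again follows from Bombieri--Vinogradov with a large enough classical exponent $A$, after the same Cauchy--Schwarz step and using~\eqref{Etrivial} for the weighted factor. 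In both cases $C_1 = C_1(m,U)$ is determined by chasing the exponents; this is routine once the case split is set up, and I expect no conceptual difficulty beyond it.
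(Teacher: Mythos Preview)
The paper offers no proof of this lemma beyond the sentence preceding it: the result is simply asserted to be a consequence of Lemma~3.5 of~\cite{HR74} together with the trivial bound~\eqref{Etrivial}. Your Cauchy--Schwarz splitting---write $m^{\nu(d)}E(x,d)=\bigl(m^{\nu(d)}E(x,d)^{1/2}\bigr)\cdot E(x,d)^{1/2}$, bound the weighted factor via~\eqref{Etrivial} and the unweighted factor via Bombieri--Vinogradov---is precisely the standard argument behind that lemma in~\cite{HR74}, so in substance your approach coincides with the paper's intended route.

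The complication you wrestle with, namely $\log z$ in both the cutoff and the target with $z$ an independent parameter, is a red herring: Lemma~3.5 of~\cite{HR74} is stated with $\log x$ throughout, and the only application in the paper (in the proof of Proposition~\ref{ubmhoprime}, where $X\asymp x/\log x$) needs only the $\log x$ version. Your Case~2 argument has a real gap as written. When $z$ is bounded, say $z=2$, the range $d<x^{1/2}(\log z)^{-C_1}$ is essentially $d\ll x^{1/2}$, and the trivial bound on the tail $x^{1/2}(\log x)^{-B}<d<x^{1/2}(\log z)^{-C_1}$ contributes a term of order $x(\log x)^{m-1}$ (even replacing~\eqref{Etrivial} by Brun--Titchmarsh saves only one logarithm), which for $m\ge 2$ is not $O\bigl(x/(\log z)^{U}\bigr)=O(x)$. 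No choice of $C_1$ repairs this; the resolution is to read the statement with $\log x$ in place of $\log z$, after which your Cauchy--Schwarz argument goes through directly without any case split.
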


\subsection{Sieving away small prime factors}
\label{sslbmho}

In this section, we prove the following result.

\begin{proposition}\label{propbmho}
With the above notation and assumption (\ref{esunit}), we have for $2 \le z \le x$
\begin{eqnarray}
\label{term0}
\#\Omega(x, z) = xW_k(z) (1+O(E_0(x, z))), 
\end{eqnarray}
where
\begin{eqnarray}
\label{error0}
E_0(x, z) = \exp(-u(\log u - \log \log u - \log k - 2)) + \frac{1}{\log z}, \\
\label{defu}
\text{and } 
u=
\log x / \log z. 
\phantom{mmmmmmmmmmmmmmmmm}
\end{eqnarray}
\end{proposition}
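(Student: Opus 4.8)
The plan is to apply a standard sieve estimate — the Fundamental Lemma of sieve theory — to the linear-form sieve problem defined by the set $\Omega(x,z)$, and to track the dependence on $\Sigma$ carefully enough to get the stated error term $E_0(x,z)$. Recall that $\#\Omega(x,z)$ counts $n\in[1,x]$ with $\gcd\bigl(F_k(n),P_\Sigma(z)\bigr)=1$, i.e.\ we sift the sequence $\cA=\{F_k(n):1\le n\le x\}$ by the primes $p<z$ with $p\notin\Sigma$. The local density is the multiplicative function $\rho_k$, with $\rho_k(p)=k$ for $p\notin\Sigma$ and $0$ otherwise, and the remainder satisfies \eqref{Rd}, namely $|R_d|\le\rho_k(d)=k^{\nu(d)}$ for square-free $d\mid P_\Sigma(z)$. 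The expected main term is $xW_k(z)$ by \eqref{evzdef}.

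First I would set up the inclusion–exclusion / Brun-type combinatorial sieve exactly as in Theorem 2.6 of \cite{HR74}, p.~85, but restricted to the support $P_\Sigma(z)$ so that the excluded primes from $\Sigma$ never enter. Writing $\#\Omega(x,z)=\sum_{d\mid P_\Sigma(z)}\mu(d)\#\{n\le x:d\mid F_k(n)\}$ and truncating the Möbius function at level $D$ via the combinatorial sieve weights, the main term is $x\sum_{d\mid P_\Sigma(z),\,d<D}\mu(d)\rho_k(d)/d$, which by the usual argument equals $xW_k(z)\bigl(1+O(e^{-u\log u+\cdots})\bigr)$ once $D$ is chosen as a suitable power of $z$; here the condition \eqref{erhopp} that $\rho_k(p)/p\le 1/2$ (guaranteed because all $p\le 2k$ lie in $\Sigma$) is what makes the tail of the Rankin-type sum converge and produces the factor $e^{-u(\log u-\log\log u-\log k-2)}$. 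The remainder term is $\sum_{d<D,\,d\mid P_\Sigma(z)}3^{\nu(d)}|R_d|\le\sum_{d<D}3^{\nu(d)}k^{\nu(d)}=\sum_{d<D}(3k)^{\nu(d)}\ll D(\log D)^{3k-1}$ by \eqref{meankomega}; choosing $D=z^{c}$ for an appropriate constant and noting $x\ge z$ forces $D\le x^{1/2}$ or so, this remainder is absorbed into $xW_k(z)\cdot O(1/\log z)$ since $W_k(z)\asymp(\log z)^{-k}$ by \eqref{Mertens1} and \eqref{ewzprod}. Combining the two contributions gives \eqref{term0} with $E_0$ as in \eqref{error0}.

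The main obstacle — or rather the point requiring care — is the bookkeeping of the set $\Sigma$: one must check that restricting all summations to divisors of $P_\Sigma(z)$ does not spoil the dimension-$k$ sieve structure (it does not, since for $p\notin\Sigma$ we still have exactly $\rho_k(p)=k$ residue classes, and for $p\in\Sigma$ the local factor is simply $1$), and that the constants in the final bound depend only on $k$, not on $\Sigma$ or on the $a_i,b_i$. The dependence on $a_i,b_i$ enters only through the threshold $z_1$ needed so that \eqref{Rd} is genuinely valid (the forms must be nondegenerate mod each sifting prime, which is exactly \eqref{esunit}), and through the requirement that $D$ be large enough; since we take $D$ a fixed power of $z$ and require $z\le x$, the statement as phrased with $2\le z\le x$ is fine, the power-of-$x$ constraint on $z$ being deferred to later propositions. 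I would also invoke the Mertens estimates \eqref{Mertens1} to identify $W_k(z)$ with $C(k)(\log z)^{-k}$ up to the $2^{\#\Sigma}$ factor, but note that for Proposition~\ref{propbmho} itself we only need $W_k(z)$ to appear literally, so no such identification is strictly necessary here — it will matter later when comparing against the upper bound of Section~\ref{ssubmhoprime}.
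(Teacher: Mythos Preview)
Your approach is essentially the paper's: both reduce the estimate to a black-box application of the Fundamental Lemma as formulated in \cite{HR74}. The paper invokes Theorem~2.5$'$ there (rather than Theorem~2.6), because 2.5$'$ already accommodates an excluded set of primes via $\overline{\mathfrak{P}}=\Sigma$, so no ad~hoc restriction of the support is needed; the proof then consists only of writing down the dictionary ($X=x$, $\omega=\rho_k$, $\kappa=k$, $\alpha=1$) and checking $(\Omega_1)$, $(\Omega_2(\kappa))$, $(R_0)$ and $(R_1(\kappa,1))$ from \eqref{erhopp}, \eqref{Rd} and \eqref{meankomega}.

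One point in your sketch is off: you write that the truncation level $D$ should be ``a suitable power of $z$'', and later ``$D=z^c$''. That cannot produce the stated error $\exp(-u(\log u-\cdots))$ with $u=\log x/\log z$, since with $D=z^c$ the exponent would be a constant. The level must be essentially $x$ (this is what $\alpha=1$ encodes in the paper's dictionary), and then the remainder $\sum_{d<D}(3k)^{\nu(d)}\ll D(\log D)^{3k-1}$ is controlled by taking $D=x/(\log x)^{C_0}$ for a suitable $C_0$, exactly the content of condition $(R_1(\kappa,1))$. With that correction your argument goes through and matches the paper's.
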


\begin{proof}
We are going to use Theorem 2.5' of \cite{HR74}, noticing 
that in the main relation, $\log x/\alpha$ is to be read 
$\log(\kappa/\alpha)$. We refer the Reader to \cite{HR74} 
for the statement of Theorem 2.5', as well as the 
notation given there. Let us write the dictionary 
between the notation from \cite{HR74} and our notation.
\begin{align}
\notag
\mathcal{A} & =  \{F_k(n) \colon 1 \le n \le x\},\\
\notag
\mathfrak{P} &= \{p \colon p \notin \Sigma\} \text{ and } 
\overline{\mathfrak{P}} = \Sigma,\\
\notag
\omega(d) &= \rho_k(d),\\
\notag
\kappa &= k,\\
\notag
X &= x,\\
\notag
U &=1,\\
\notag
\alpha &= 1,\\
\notag
R_d &= \#\{n \in [1, x] \colon F_k(n) \equiv 0 \pmod d\} - x\rho_k(d)/d. 
\end{align}

Relation (\ref{erhopp}) implies ($\Omega_1$) (p.29) with $A_1=2$.

By the definition of $\rho_k$, we have for all $p$ : $\rho_k(p) \le k$, 
which implies Relation ($\Omega_0$) 
of \cite{HR74}, p. 30, and thus (cf. Lemma 2.2 p. 52) 
Relation ($\Omega_2(\kappa)$) with $A_2=\kappa$.

Relations ($R_0$) and ($R_1(\kappa, 1)$) (defined 
in p. 64 of \cite{HR74}), with $L=1$, $A_0'=k$ and 
$C_0(U) = 2k+U-1$  come from (\ref{Rd}) 
and (\ref{meankomega}).

We notice that $S(\mathcal{A}; \mathfrak{P}, z)$ is 
$\#\Omega(x, z)$ and thus Theorem 2.5' of  \cite{HR74} 
implies our Proposition \ref{propbmho}.
\end{proof}

%%
% NEW SECTION 3.4
%%

\subsection{Sieving away prime values}
\label{ssubmhoprime}

In this part, we are interested in evaluating the 
cardinality of the set
\begin{equation}\label{mhopstar}
\Omega^{(*)}(x, z) = \{n \le x \colon  \gcd\left(L_1(n)\cdots 
L_{k-1}(n), P_{\Sigma}(z)\right) = 1, L_k(n) \text{ prime}\},
\end{equation}
and we shall prove the following
\begin{proposition}\label{ubmhoprime}
With the above notation and assumption (\ref{esunit}), 
we have for $2 \le z \le x$
\begin{eqnarray}
\label{termp}
\#\Omega^{(*)}(x, z) = \frac{\li(|a_k|x)}{\varphi(|a_k|)}W^{(*)}_{k-1}(z) 
(1+O(E^{(*)}(x, z))), 
\end{eqnarray}
where
\begin{eqnarray}
\label{errorp}
E^{(*)}(x, z) 
= \exp(-(u/3)(\log u - \log \log u - \log (k-1) - 3)) 
+ \frac{1}{(\log z)}, \\
\label{defustar}
\text{and } 
u=\log x / \log z.
\phantom{mmmmmmmmmmmmmmmmmmmmmmm}
\end{eqnarray}
\end{proposition}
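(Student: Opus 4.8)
The plan is to mirror the proof of Proposition~\ref{propbmho}, but now applying the sieve to the product $F_{k-1}(n)=L_1(n)\cdots L_{k-1}(n)$ of only the first $k-1$ forms, while treating the single remaining form $L_k(n)=a_kn+b_k$ as a ``linear form whose prime values we want to count'' rather than as a sieving datum. Concretely, instead of the sifting set $\mathcal{A}=\{F_k(n):1\le n\le x\}$ used before, I would take $\mathcal{A}=\{F_{k-1}(n): 1\le n\le x,\ L_k(n)\text{ prime}\}$, with the sieving density $\omega(d)=\rho_{k-1}(d)$, dimension $\kappa=k-1$, sifting primes $\mathfrak P=\{p:p\notin\Sigma\}$ exactly as before, and verify that $S(\mathcal A;\mathfrak P,z)=\#\Omega^{(*)}(x,z)$. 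The output $W^{(*)}_{k-1}(z)$ rather than $W_{k-1}(z)$ arises because the main term of $\#\{n\le x: L_k(n)\text{ prime},\ d\mid F_{k-1}(n)\}$ is governed by primes in an arithmetic progression: for $\gcd(d,a_k)=1$, the condition $d\mid F_{k-1}(n)$ restricts $n$ to $\rho_{k-1}(d)$ residue classes mod $d$, each of which (because the solutions are coprime to $d$, by~\eqref{esunit}) pins $L_k(n)$ to a fixed class coprime to $d$, so the count is $\sim \rho_{k-1}(d)\,\li(|a_k|x)/(\varphi(|a_k|)\varphi(d))$. This replaces the factor $\rho_{k-1}(p)/p$ by $\rho_{k-1}(p)/(p-1)$ in the product, yielding $W^{(*)}_{k-1}(z)$.

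The first step is therefore to write out the dictionary to Theorem~2.6' of~\cite{HR74} (equivalently Theorem~2.5' with the appropriate bookkeeping), checking: condition~$(\Omega_1)$ with $A_1=2$ via~\eqref{erhopp} applied to $\rho_{k-1}$; condition~$(\Omega_2(\kappa))$ with $\kappa=k-1$ and $A_2=k-1$ via $\rho_{k-1}(p)\le k-1$ and Lemma~2.2 of~\cite{HR74}; and the remainder conditions $(R_0)$, $(R_1(\kappa,1))$ with $L=1$. The remainder here is $$ R_d=\#\{n\le x: L_k(n)\text{ prime},\ d\mid F_{k-1}(n)\}-\frac{\rho_{k-1}(d)}{\varphi(d)}\cdot\frac{\li(|a_k|x)}{\varphi(|a_k|)}, $$ and the second step is to control $\sum_{d<D}\mu^2(d)(k-1)^{\nu(d)}|R_d|$. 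This is exactly where Lemma~\ref{lemBV} enters: the inner count of primes in the residue class determined by $d\mid F_{k-1}(n)$ has error bounded by $\rho_{k-1}(d)$ copies of $E(|a_k|x, |a_k|d)$ (plus $O(\rho_{k-1}(d))$ from the discrete count of admissible $n$), so the $d$-sum is $O(x/(\log z)^U)$ on the range $d<x^{1/2}(\log z)^{-C_1}$ after absorbing the fixed factor $|a_k|$.

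The third step is purely computational: feeding these verified hypotheses into the Fundamental Lemma (Theorem~2.6' of~\cite{HR74}, with the reading $\log x/\alpha \leadsto \log(\kappa/\alpha)$ as noted after Proposition~\ref{propbmho}) gives $$ \#\Omega^{(*)}(x,z)=\frac{\li(|a_k|x)}{\varphi(|a_k|)}\,W^{(*)}_{k-1}(z)\,\bigl(1+O(E^{(*)}(x,z))\bigr) $$ with the stated shape of $E^{(*)}$; the factor $u/3$ and the constant $-3$ in the exponent (versus $u$ and $-2$ in Proposition~\ref{propbmho}) are precisely the weaker parameters that Theorem~2.6' delivers compared with Theorem~2.5', reflecting the loss incurred by extracting a prime value, and the $1/(\log z)$ term comes from the Mertens-type asymptotics~\eqref{Mertens1}.

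I expect the only genuine obstacle to be the remainder bookkeeping in the second step, i.e.\ making sure that when $d\mid F_{k-1}(n)$ forces $L_k(n)$ into a residue class, that class is coprime to $|a_k|d$ (so that $E(|a_k|x,|a_k|d,\cdot)$ is the relevant quantity and Lemma~\ref{lemBV} applies) and that the $\rho_{k-1}(d)$-fold multiplicity is correctly tracked through the Bombieri--Vinogradov sum with the weight $(k-1)^{\nu(d)}$. Coprimality of the solutions of $F_{k-1}(n)\equiv0\pmod d$ with $d$ is already established from~\eqref{esunit}; coprimality of the resulting value $L_k(n)$ with $d$ follows because the pairwise non-vanishing $a_ib_k-a_kb_i\ne0$ from~\eqref{enondegenij}, together with $\Sigma$ containing all prime divisors of these resultants, prevents any prime $p\notin\Sigma$ from dividing both $L_i(n)$ and $L_k(n)$. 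Once this is in place, everything else is a routine transcription of the argument of Proposition~\ref{propbmho}.
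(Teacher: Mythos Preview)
Your approach matches the paper's exactly: apply Theorem~2.5' of \cite{HR74} (not 2.6'---the paper invokes 2.5' in both proofs) to the sequence $\{F_{k-1}(n): L_k(n)\text{ prime}\}$, with the coprimality bookkeeping you describe and Lemma~\ref{lemBV} controlling the remainder sum.

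Two slips to fix. First, the sieving density is not $\omega(d)=\rho_{k-1}(d)$ but $\omega(d)=d\rho_{k-1}(d)/\varphi(d)$, since the main term for $\#\mathcal A_d$ is $\rho_{k-1}(d)X/\varphi(d)$ with $X=\li(|a_k|x)/\varphi(|a_k|)$; you already say this correctly in words, but the dictionary entry should match. Second, and more importantly, the remainder condition you verify is $(R_1(\kappa,1/2))$, not $(R_1(\kappa,1))$: Bombieri--Vinogradov only gives level $x^{1/2}$, so $\alpha=1/2$ here (versus $\alpha=1$ in Proposition~\ref{propbmho}). This is precisely where the degradation $u\to u/3$ and $-2\to -3$ in the exponent comes from---it is the $\alpha=1/2$ fed into Theorem~2.5', not a switch of theorems. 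Your attribution of the $1/\log z$ term to Mertens is also off; it is part of the error term produced by Theorem~2.5' itself.
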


\begin{proof}
We first notice that, without loss of generality, 
changing if needed $(a_i, b_i)$ into $(-a_i, -b_i)$, we can 
assume that all the $a_i's$ are positive: this is 
what we assume in the proof.

It will be convenient to let $h = k-1$, $F_h(n) = 
L_1(n)\times\cdots\times L_h(n)$. We are again 
going to use Theorem 2.5' of \cite{HR74}. Getting 
a relation ($R_1(\kappa, \alpha)$) will be more 
challenging, but the Bombieri-Vinogradov 
inequality in the form (\ref{eqBV}) will be 
most helpful. As in the previous section, we start with our dictionary.
\begin{align}
\notag
\mathcal{A} & = \{\text{$F_h((q-b_k)/a_k)$: $q$  prime, 
$a_k + b_k \le q \le a_kx + b_k$, $q \equiv b_k \bmod {a_k}$} \},\\
\notag
\mathfrak{P} &= \{p \colon p \notin \Sigma\} \text{ and }
 \overline{\mathfrak{P}} = \Sigma,\\
\notag
\omega(d) &= d\rho_{h}(d)/\varphi(d),\\
\notag
\kappa &= h = k-1,\\
\notag
X &= \li (a_k x)/\varphi(a_k),\\
\notag
U &=1,\\
\notag
\alpha &= 1/2,\\
\notag
R_d &= \# \{a \in \mathcal{A} \colon d \mid a\} - \frac{\omega(d)}{d}X .
\end{align}
We check the validity of Relations ($\Omega_0$) 
and $(\Omega_2(\kappa))$ 
by the same argument as in Section~\ref{sslbmho}.

We notice that $R_d $ is defined in terms of the cardinality of 
$\mathcal{A}_d$; it is more convenient for us to consider, for 
$d$ having no prime divisor from $\Sigma$, the set
$$
\mathcal{B}_d = \left\{q \in [a_k + b_k, a_kx +b_k] \colon q \text{ prime}, 
q \equiv b_k\!\!\! \pmod{a_k}, d \mid F_h\left(\frac{q-b_k}{a_k}\right)\right\}
$$
which has the same cardinality as $\mathcal{A}_d$. 
By the remark concerning the solutions of (\ref{Fmodd}) 
and the fact that $d$ and $a_k$ are coprime, 
there exists a set $T_k(d) \subset (\mathbb{Z}/a_kd\mathbb{Z})^{*}$ 
with cardinality $\#T_k(d) = h^{\nu(d)}$ such that
$$
q \in \mathcal{B}_d \; \Longleftrightarrow \; 
q \in [a_k + b_k, a_kx +b_k]  \text{ and } q \in T_k(d) \pmod {a_kd}.
$$
We thus have, for $d$ having no prime factor from $\Sigma$
\begin{eqnarray}
\notag
\# \mathcal{A}_d &=&\# \mathcal{B}_d = 
\quad \sum_{t \in T_k(d)} \left(\pi(a_kx, a_k d, t) + O(1)\right)\\
\notag
&=& h^{\nu(d)} \left(\frac{\li(a_k x)}{\varphi(a_k d)}\right) 
+ O\left(h^{\nu(d)} (E(a_kx, a_k d)  + 1) \right)\\
\notag
&=& \frac{h^{\nu(d)}}{\varphi(d)} X
 + O\left(h^{\nu(d)} \left(E(a_kx, a_k d)  + 1\right) \right),
\end{eqnarray}
which implies
\begin{equation}\label{Rdprime}
R_d  = O\left(h^{\nu(d)} \left(E(a_kx, a_k d)  + 1\right) \right).
\end{equation}
Relation ($R_0$) comes from the previous relation,  the trivial 
upper bound \linebreak
$E(a_k x, a_kd) \le x/d + 1$ and the definition of $X$.

Relation ($R(\kappa, 1/2)$) comes from Lemma \ref{lemBV} 
and Relation (\ref{meankomega}).

We can now apply Theorem 2.5' of \cite{HR74} and get 
Proposition \ref{ubmhoprime} with a slightly better constant and 
$u = \log X / \log z$. It is more convenient for us to state the 
result in terms of $u = \log x / \log z$.
\end{proof}

\subsection{Sieving away non-squarefree values}
\label{ssnosqfree}
We also want to count~$n$ such that  $L_i(n)$ 
is not $\Sigma$-squarefree.  
This is relatively easy. Set
$$
M=\max\{|a_1|, \ldots, |a_k|, |b_1|, \ldots, |b_k|\}. 
$$

\begin{proposition}
\label{pnosqfr}
In the set-up of Theorem~\ref{thsieving},  for ${x\ge z\ge 2}$  
the set $\Omega(x,z)$ has at most 
$$
kM\frac{x+1}{z-1}+k\sqrt{Mx+M}
$$ 
elements~$n$ such that $L_i(n)$ is not $\Sigma$-squarefree for some~$i$. 
\end{proposition}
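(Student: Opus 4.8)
The plan is to bound, for each fixed $i$ and each prime $p$ with $z \le p$, the number of $n \in [1,x]$ for which $p^2 \mid L_i(n)$, and then sum over $p$ and $i$. Fix $i$. If $p \nmid a_i$, then the congruence $L_i(n) = a_i n + b_i \equiv 0 \pmod{p^2}$ has exactly one solution modulo $p^2$, so the number of $n \in [1,x]$ with $p^2 \mid L_i(n)$ is at most $x/p^2 + 1$. If $p \mid a_i$, then since $\gcd(a_i,b_i)=1$ by \eqref{enondegeni} we have $p \nmid b_i$, hence $p \nmid L_i(n)$ for every $n$, and such $p$ contribute nothing. So in all cases the count is at most $x/p^2 + 1$, and crucially only primes $p$ dividing $a_i$ are excluded from the "one solution" regime — but those contribute $0$, so we may freely use the bound $x/p^2+1$ for all $p$.

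Now I would split the sum over primes $p \ge z$ at the threshold $\sqrt{Mx+M}$. Recall that if $L_i(n)$ fails to be $\Sigma$-squarefree, then $p^2 \mid L_i(n)$ for some prime $p \notin \Sigma$ with $p \ge z$ (primes in $\Sigma$ are allowed in the $\Sigma$-unit part). Note $|L_i(n)| \le |a_i| x + |b_i| \le Mx + M$ for $1 \le n \le x$. For the "large" primes, i.e. those with $p > \sqrt{Mx+M}$: if $p^2 \mid L_i(n)$ and $L_i(n) \ne 0$, then $p^2 \le |L_i(n)| \le Mx+M$, a contradiction; and $L_i(n)=0$ is impossible since $a_i \ne 0$ would force $n = -b_i/a_i$, at most one value of $n$, but in fact more simply $L_i(n)=0$ has at most one integer solution. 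Actually let me instead bound the large-$p$ contribution directly: the number of $n \le x$ divisible-by-$p^2$-in-$L_i$ with $p$ ranging over $[\sqrt{Mx+M},\infty)$ is at most the number of $n \le x$ such that $L_i(n)$ has a square factor $> Mx+M$, which is $0$ unless $L_i(n)=0$; since $L_i$ is injective on integers (as $a_i\ne 0$), this is at most $1$ value of $n$, but to be safe I will just note each such $n$ gives $|L_i(n)| \ge p^2 > Mx+M \ge |L_i(n)|$ unless $L_i(n)=0$. Hmm — cleaner: for the range $z \le p \le \sqrt{Mx+M}$ use the count $x/p^2+1$; the number of primes in this range is at most $\sqrt{Mx+M}$, giving a contribution $\le \sum_{p\ge z}x/p^2 + \sqrt{Mx+M} \le x/(z-1) + \sqrt{Mx+M}$, using $\sum_{p \ge z} 1/p^2 \le \sum_{m \ge z} 1/(m(m-1)) = 1/(z-1)$; for $p > \sqrt{Mx+M}$ the contribution is $0$ by the size argument above.

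Combining: for each $i$ the number of $n \le x$ with $L_i(n)$ not $\Sigma$-squarefree is at most
$$
\frac{x}{z-1} + \sqrt{Mx+M},
$$
and more generously $M\frac{x+1}{z-1} + \sqrt{Mx+M}$ absorbs the "$+1$" terms. Summing over $i = 1,\ldots,k$ gives the claimed bound $kM\frac{x+1}{z-1} + k\sqrt{Mx+M}$. The only mildly delicate point is the bookkeeping of the $+1$ error terms and the case $p \mid a_i$, but both are handled as above; there is no real obstacle, and I would present it as a short direct computation. Note we never even needed $n \in \Omega(x,z)$ beyond $1 \le n \le x$ and the elementary congruence count, so the statement holds as stated.
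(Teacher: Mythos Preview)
Your argument is correct and follows essentially the same route as the paper: bound, for each fixed $i$ and each prime $p\ge z$, the number of $n\le x$ with $p^2\mid L_i(n)$, then sum over $z\le p\le\sqrt{Mx+M}$ using $\sum_{p\ge z}1/p^2\le 1/(z-1)$ and $\pi(\sqrt{Mx+M})\le\sqrt{Mx+M}$. The only cosmetic difference is that the paper uses the cruder per-prime bound $(|a_i|x+|b_i|)/p^2+1$ (which avoids your case split on $p\mid a_i$), arriving directly at $k(Mx+M)\sum_{p\ge z}p^{-2}+k\pi(\sqrt{Mx+M})$; your sharper bound $x/p^2+1$ works just as well and is then relaxed to match the stated inequality.
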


\begin{proof}
If $L_i(n)$ is not $\Sigma$-squarefree for some 
${n\in \Omega(x,z)}$, then ${p^2\mid L_i(n)}$ for 
some ${p\ge z}$. For a fixed~$p$ and~$i$, the 
number of positive integers~$n$ with the property 
${p^2\mid L_i(n)}$ does not exceed ${(|a_i|x+|b_i|)/p^2}+1$. 
Summing up over all ${p\ge z}$ and ${i=1,\ldots, k}$, 
we estimate the total number of ${n\in \Omega(x,z)}$ 
such that some $L_i(n)$ is not $\Sigma$-squarefree as
$$
k(Mx+M)\sum_{p\ge z}\frac1{p^2}+k\pi\left({\sqrt{Mx+M}}\right).
$$
The infinite sum above is bounded by ${1/(z-1)}$, 
whence the result. 
\end{proof}

\subsection{Proof of Theorem~\ref{thsieving}}
\label{proofthm3.1}

We are now ready to prove Theorem~\ref{thsieving}.  

The Reader will easily check that one can find 
constants $c_1, z_1$ and $\eta$ satisfying the properties 
required in the statement of Theorem \ref{thsieving} 
such that the following inequalities are valid for any 
real numbers $x$ and $z$ satisfying $x^{\eta} \ge z \ge z_1$.

By Proposition \ref{propbmho}, (\ref{ewzprod}) and (\ref{Mertens1}), one has
\begin{equation}\label{minomho}
\#\Omega(x, z) \ge (1/2) x W_k(z) \ge (1/4)C(k)x (\log x)^{-k} \ge 3c_1x (\log x)^{-k}.
\end{equation}

Let us denote by $\Omega^{prime}(x,z)$ the set of the 
elements $n$ in $\Omega(x,z)$ for which one of the values 
$L_i(n)$ is prime; applying Proposition \ref{ubmhoprime} $k$ times,  
(\ref{ewzstarprod}) and (\ref{Mertens1}), we obtain
\begin{equation}\label{majomhoprime}
\#\Omega^{prime}(x,z) \le 2 k \li\left((\max_i |a_i| )x \right) 
W_{k-1}^{(*)}(z) \le c_1x (\log x)^{-k}.
\end{equation}

Let us denote by $\Omega^{square}(x,z)$ the set of the 
elements $n$ in $\Omega(x,z)$ for which one of the 
values $L_i(n)$ is not $\Sigma$-squarefree. Proposition 
\ref{pnosqfr} tells us that we have
\begin{equation}\label{majomhosq}
\#\Omega^{square}(x,z) \le kM\frac{x+1}{z-1}+k\sqrt{Mx+M} \le c_1x (\log x)^{-k}.
\end{equation}

We have 
\begin{equation}\label{thefinal}
\#\Omega_1(x, z) \ge \#\Omega(x, z) - \#\Omega^{prime}(x,z) - \#\Omega^{square}(x,z) 
\end{equation}
and Theorem \ref{thsieving} comes from 
(\ref{thefinal}), (\ref{minomho}), (\ref{majomhoprime}) and (\ref{majomhosq}).

\section{Avoiding Prime Factors from a Sparse Set}
\label{savoiding}

In this section, we further refine the set $\Omega_1(x,z)$ 
constructed in Theorem~\ref{thsieving}, showing 
that it has ``many'' 
elements~$n$  such that $L_1(n)\cdots L_k(n)$ has 
no prime divisors 
in a ``sufficiently sparse'' set of primes. We will have to 
impose an additional 
assumption: every prime from~$\Sigma$ divides every~$a_i$.  
Probably the 
statement holds true without this  assumption, but 
imposing it will facilitate 
the proof, and the result we obtain will suffice for us.

Given an infinite set of primes $\cP$, let 
${\pi_\cP(x)=\#(\cP\cap [0,x])}$ and
$\bar\delta(\cP)$ be the relative upper density 
as defined in \ref{reldens}. Also let $L_1(n), \ldots, L_k(n)$ 
and the finite set~$\Sigma$ be 
as in Subsection~\ref{sssievtheo}.

\begin{theorem}
\label{thavoiding}
Assume the hypothesis of Theorem~\ref{thsieving}. 
Moreover, assume that 
\begin{equation}
\label{esigdiva}
\text{every~$a_i$ is divisible by every  prime from~$\Sigma$}. 
\end{equation}
Let~$\eta$ be the number as in Theorem~\ref{thsieving}. 
Then there exists 
${\eps\in (0, 1/2] }$, depending only on~$k$ and 
on $\#\Sigma$, such 
that the following holds. For any set~$\cP$ of primes with 
${\bar\delta(\cP)\le \eps}$, there exists 
 ${x_0\ge 2}$ depending on ${a_1, \ldots, a_k, b_1,\ldots, b_k}$  
and on the set~$\cP$, such that for ${x\ge x_0}$ 
at least half of the elements~$n$ of the set 
$\Omega_1(x,x^\eta)$ have the property
$$
p\nmid L_1(n)\cdots L_k(n)\qquad (p\in \cP). 
$$
\end{theorem}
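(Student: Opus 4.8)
The plan is to start from the set $\Omega_1(x,x^\eta)$ produced by Theorem~\ref{thsieving}, which already has at least $c_1 x/(\log x^\eta)^k \gg x/(\log x)^k$ elements, and to remove from it the ``bad'' $n$, namely those for which some prime $p\in\cP$ divides some $L_i(n)$. The key point is that every element $n\in\Omega_1(x,x^\eta)$ has each $L_i(n)$ a $\Sigma$-square-free composite number with no prime factor below $x^\eta$ (apart from primes in $\Sigma$, which, by hypothesis \eqref{esigdiva} together with $\gcd(a_i,b_i)=1$, cannot divide $L_i(n)$ at all). Hence any prime $p\in\cP$ dividing $L_i(n)$ must satisfy $p\ge x^\eta$, and since $L_i(n)\le Mx+M$ and $L_i(n)$ is square-free away from $\Sigma$, the number of distinct such large prime factors of $F_k(n)=L_1(n)\cdots L_k(n)$ is at most some constant $r=r(k,\eta)$ (roughly $k/\eta$, since each $L_i(n)$ has at most $1/\eta+O(1)$ prime factors exceeding $x^\eta$). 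So it suffices to bound, for each $i$ and each prime $p\in\cP\cap[x^\eta,\, Mx+M]$, the number of $n\le x$ with $p\mid L_i(n)$, and to sum.

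The main step is the counting estimate. For fixed $i$ and fixed prime $p$ with $p\ge x^\eta$, the congruence $L_i(n)\equiv0\pmod p$ has exactly one solution mod $p$ (as $p\nmid a_i$), so the number of $n\le x$ with $p\mid L_i(n)$ is at most $x/p+1$. Summing over the relevant primes,
\begin{equation}
\label{eq:badcount}
\#\{n\le x:\ p\mid L_1(n)\cdots L_k(n)\ \text{for some}\ p\in\cP,\ p\ge x^\eta\}
\ \le\ k\sum_{\substack{p\in\cP\\ x^\eta\le p\le Mx+M}}\Bigl(\frac{x}{p}+1\Bigr).
\end{equation}
The term $k\sum_{p\le Mx+M}1\le k\,\pi_\cP(Mx+M)$ is $O(\eps x/\log x)$ once $x$ is large, using $\bar\delta(\cP)\le\eps$ and the prime number theorem; this is negligible against $x/(\log x)^k$. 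The main term is $kx\sum_{x^\eta\le p\le Mx+M,\ p\in\cP}1/p$. Here I would invoke partial summation against the counting function $\pi_\cP(t)$: since $\bar\delta(\cP)\le\eps$, we have $\pi_\cP(t)\le (\eps+o(1))\,t/\log t$ for $t\ge t_0(\cP)$, and a partial-summation computation gives $\sum_{x^\eta\le p\le Mx+M,\,p\in\cP}1/p \le (\eps+o(1))\log(1/\eta)+o(1) \le C(\eta)\eps$ for $x$ large (the $x$-independence of the bound on the logarithmic interval $[x^\eta, Mx]$ is exactly why the relative-density hypothesis is the right one). Hence the right-hand side of \eqref{eq:badcount} is at most $C'(k,\eta)\,\eps\, x$ for $x\ge x_0(\cP, a_i,b_i)$.

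Finally I would choose $\eps$ small enough, depending only on $k$ and $\#\Sigma$ (note $\eta$ itself depends only on $k$ and $\#\Sigma$ by Theorem~\ref{thsieving}), so that $C'(k,\eta)\,\eps < c_1/2$, where $c_1$ is the constant of Theorem~\ref{thsieving}; concretely $\eps := \min\{1/2,\ c_1/(2C'(k,\eta))\}$. Then for $x\ge x_0$ the number of bad $n$ in $\Omega_1(x,x^\eta)$ is less than $\tfrac12 c_1 x/(\log x)^k \le \tfrac12\#\Omega_1(x,x^\eta)$, so at least half of the elements of $\Omega_1(x,x^\eta)$ survive, which is the assertion. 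The one point needing a little care is making the passage from $\bar\delta(\cP)\le\eps$ to a uniform bound on $\sum_{x^\eta\le p\le Mx}1/p$ genuinely uniform in $x$: the $\limsup$ in the definition of $\bar\delta$ only controls $\pi_\cP(t)$ for $t$ beyond a threshold depending on $\cP$, which is why $x_0$ is allowed to depend on $\cP$; for $t$ below that threshold the contribution to $\sum 1/p$ is absorbed into the $o(1)$ once $x^\eta$ exceeds the threshold, i.e. once $x$ is large. This is the only real obstacle, and it is routine analytic-number-theory bookkeeping rather than a genuine difficulty.
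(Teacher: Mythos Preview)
Your argument has a genuine gap at the comparison step. You bound the number of bad $n$ by
\[
k\sum_{\substack{p\in\cP\\ x^\eta\le p\le Mx+M}}\Bigl(\frac{x}{p}+1\Bigr)\ \le\ C'(k,\eta)\,\eps\, x,
\]
and then try to make this smaller than $\tfrac12\#\Omega_1(x,x^\eta)$. But Theorem~\ref{thsieving} only gives $\#\Omega_1(x,x^\eta)\ge c_1 x/(\log x^\eta)^k\asymp x/(\log x)^k$, which is \emph{much smaller} than $x$. No fixed choice of $\eps$ (depending only on $k$ and $\#\Sigma$) can make $C'(k,\eta)\,\eps\, x<\tfrac12 c_1 x/(\log x)^k$ for all large $x$: the left side is linear in $x$, the right side is $x$ divided by a power of $\log x$. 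Your sentence ``choose $\eps$ so that $C'(k,\eta)\eps<c_1/2$'' silently drops the factor $(\log x)^{-k}$ from the target, and that is precisely the factor you cannot afford to lose. (The same issue already shows up with the ``$+1$'' term: $k\,\pi_\cP(Mx+M)\asymp \eps x/\log x$ is also too large when $k\ge 2$.)

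What is missing is a bound on $\#\{n\in\Omega_1(x,z):p\mid L_i(n)\}$ that retains the sieve savings $(\log z)^{-k}$, not the trivial bound $x/p+1$ for $n\le x$. The paper obtains this (Proposition~\ref{poneprime}) by writing $n=u+pn'$ with $u\equiv -b_i/a_i\pmod p$, forming new linear forms $L_j'(n')$, and \emph{re-applying the upper-bound sieve} to the shorter range $n'\le x/p$ with the enlarged set $\Sigma'=\Sigma\cup\{p\}$; this yields $\#\{n\in\Omega_1(x,z):p\mid L_i(n)\}\ll 2^{\#\Sigma}(x/p)(\log z)^{-k}$. There is a genuine technical point here that your outline does not address: since $p$ can be almost as large as $x$, the new range $x/p$ may be too short to sieve up to level $z$. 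The paper uses the fact that $L_i(n)$ is \emph{composite} for $n\in\Omega_1$ (this is why one works with $\Omega_1$ rather than $\Omega$) to force $|L_i(n)/p|\ge z$, hence $x/p\ge z/M-1$, and then sieves with a reduced level $z'\asymp z^{1/50k}$. Once this estimate is in hand, your partial-summation bound $\sum_{p\in\cP,\,x^\eta\le p\le x}1/p\ll\eps$ (the paper's Lemma~\ref{lloglog}) finishes the job exactly as you describe, now with both sides of order $x/(\log x)^k$.
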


\begin{rmk}
Condition \eqref{esigdiva} implies that 
$L_i(n)$ cannot have divisors in~$\Sigma$; in particular,
 ``$\Sigma$-squarefree'' 
from Theorem~\ref{thsieving} can be replaced by ``squarefree''.
\end{rmk}

We start from an individual prime. In the sequel, 
we write ${a=a_k}$, ${b=b_k}$ and 
${L(n)=L_k(n)=an+b}$. We also set ${M=\max\{|a|,|b|\}}$. 

\begin{proposition}
\label{poneprime}
Assume the hypothesis of Theorem~\ref{thsieving}. 
Further, assume that 
\begin{equation}
\label{epdandb}
\text{every prime from~$\Sigma$ divides~$a$}. 
\end{equation}
Then there exist real numbers
 ${C_3\ge 2}$ depending only on~$k$,  and ${z_3\ge 2}$ 
 depending on~$k$ and~$M$ such that the following holds. 
 Let~$p$ be a prime number, ${p\notin\Sigma}$. 
 Then for any~$x$ and~$z$ satisfying
${x\ge  z\ge z_3}$,
the set 
$\Omega_1(x,z)$ has at most 
${C_3 \cdot 2^{\#\Sigma} (x/p)(\log z)^{-k}}$ 
elements~$n$ such that $p\mid L(n)$. 
\end{proposition}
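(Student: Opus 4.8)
The plan is to bound the number of $n \in \Omega_1(x,z)$ with $p \mid L(n)$ by first fixing the residue class of $n$ modulo $p$ that this condition imposes, and then sieving the remaining $k-1$ linear forms on that arithmetic progression. Since $\gcd(a,b)=1$ and $p \nmid a$ (as $p\notin\Sigma$ and every prime of $\Sigma$ divides $a$... wait, we only need $p\notin\Sigma$ together with \eqref{esigdiva}/\eqref{epdandb} to know $p\nmid a$ — actually $p\nmid a$ follows since if $p\mid a$ then, being a prime dividing $a$... no: \eqref{epdandb} says primes of $\Sigma$ divide $a$, not conversely; but $L(n)=an+b\equiv 0\pmod p$ with $p\mid a$ would force $p\mid b$, contradicting $\gcd(a,b)=1$). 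So the congruence $L(n)\equiv 0\pmod p$ has exactly one solution $n\equiv n_0\pmod p$, and moreover $\gcd(n_0 a + \text{stuff})$... the point is that on this progression the other forms $L_1,\dots,L_{k-1}$ remain a non-degenerate system. First I would substitute $n = n_0 + p t$, turning each $L_i(n)$ ($i<k$) into a new linear form $\tilde L_i(t) = (a_i p) t + (a_i n_0 + b_i)$ in the variable $t$, with $1 \le n \le x$ becoming $t$ in an interval of length at most $x/p + 1$.

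The key step is to apply the upper-bound sieve (Theorem~2.2 of \cite{HR74}, or the Fundamental Lemma as used in Proposition~\ref{propbmho}) to the product $\tilde F(t) = \tilde L_1(t)\cdots \tilde L_{k-1}(t)$, sifting out primes $q < z$ with $q\notin\Sigma$. One must check the non-degeneracy hypotheses: the leading coefficients $a_i p$ and the resultants $(a_i p)(a_j n_0 + b_j) - (a_j p)(a_i n_0 + b_i) = p(a_i b_j - a_j b_i)$ are, apart from the factor $p$ itself, $\Sigma$-units by \eqref{esunit}; and we must exclude $q = p$, which is harmless since it only removes one prime from the sifting set. The sieve then yields an upper bound of the shape $C \cdot (x/p) \cdot W_{k-1}(z) \cdot (1 + \text{error})$, and by \eqref{ewzprod}, \eqref{ewzstarprod}... more precisely by the analogue of \eqref{ewzprod} for $W_{k-1}$ together with \eqref{Mertens1}, we have $W_{k-1}(z) \asymp (\log z)^{-(k-1)}$ with an implied constant absorbing a $2^{\#\Sigma}$ factor from the primes of $\Sigma$ below $z$. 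This gives $O(2^{\#\Sigma}(x/p)(\log z)^{-(k-1)})$, which is weaker than what we want by a factor $\log z$; the missing saving must come from the condition that $L_1(n),\dots,L_k(n)$ are \emph{composite} (built into $\Omega_1$), exactly as in Section~\ref{ssubmhoprime}.

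To recover the extra $(\log z)^{-1}$, I would instead mimic the argument of Proposition~\ref{ubmhoprime}: among the $n$ counted above, those lying in $\Omega_1$ have every $L_i(n)$ a $\Sigma$-squarefree composite number; so in particular $L_k(n)/p$... better: one of the forms, say run the sieve on $\tilde L_1(t)\cdots \tilde L_{k-2}(t)$ while demanding $\tilde L_{k-1}(t)$ prime — no, cleaner is to note that since each $L_i(n)$ is composite and $\Sigma$-squarefree with no prime factor $< z$ from $\Sigma^c$, it has at least two prime factors, both $\ge z$; in particular $L(n) = an+b \ge z^2$ forces a lower cutoff, but the decisive gain is to sieve $\tilde L_1 \cdots \tilde L_{k-1}$ and additionally require that $\tilde L_1(t)$, say, be prime (which it is not — it is composite), so we rather sieve $k-1$ forms but use the composite condition on one of them to pass to $\tilde L_1(t) = p' \cdot (\text{larger part})$... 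The genuinely clean route: apply the Fundamental Lemma to the $k-1$ forms $\tilde L_1,\dots,\tilde L_{k-1}$ to get the main term $\asymp 2^{\#\Sigma}(x/p)(\log z)^{-(k-1)}$, then observe this already \emph{is} the claimed bound since the exponent in the statement is $-k$ — wait, the statement has exponent $-k$. So in fact the bound $C_3 2^{\#\Sigma}(x/p)(\log z)^{-k}$ requires the full $k$-fold saving, which means I should sieve with respect to the product of \emph{all} $k$ forms $\tilde L_1,\dots,\tilde L_{k-1}$ together with the "form" recording $L_k(n)/p$; equivalently, run the sieve of Proposition~\ref{ubmhoprime} with $k$ playing the role of $h$ there, on the set $\{(n - n_0)/p : n\in[1,x],\ L(n)\equiv 0 \bmod p\}$, so the main term is $(x/p) W_k(z)(1+o(1)) \asymp 2^{\#\Sigma}(x/p)(\log z)^{-k}$ by \eqref{ewzprod} and \eqref{Mertens1}, which is exactly the assertion.

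The main obstacle I anticipate is bookkeeping the dependence of the implied constants: ensuring $C_3$ depends only on $k$ while $z_3$ is allowed to depend on $k$ and $M = \max\{|a|,|b|\}$ — the latter enters through the admissibility range of the Fundamental Lemma (one needs $z$ large compared to the coefficients of the shifted forms $\tilde L_i$, whose sizes are governed by $M$ and by $a_i, b_i$, hence ultimately by the global $M$ of Theorem~\ref{thsieving}, which is fine) and through the error term $\exp(-u(\log u - \cdots))$ being genuinely $o(1)$ only once $z \ge z_3$. The second delicate point is uniformity in $p$: the shifted forms have leading coefficients of size $\sim pM$, so one must verify that the Fundamental Lemma's error term does not degrade with $p$ — it does not, because after pulling out the arithmetic progression the relevant sieve dimension and the ratio $\log x/\log z$ are unchanged, and the "$+1$" ambiguities in interval lengths contribute $O(2^{\#\Sigma})$ total after summing the sieve weights, which is absorbed. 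I would handle this exactly as the $O(1)$ terms are handled in \eqref{Rdprime}.
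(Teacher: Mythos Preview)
Your final approach --- substituting $n = n_0 + pt$ and sieving all $k$ shifted linear forms $\tilde L_i$ (including the form $\tilde L_k(t)$ recording $L_k(n)/p$) with $\Sigma' = \Sigma \cup \{p\}$ --- is exactly what the paper does, and it does produce the correct main term $\asymp 2^{\#\Sigma}(x/p)(\log z)^{-k}$. But there is a genuine gap in your treatment of the range condition, and it is precisely the point you wave away at the end. You claim ``the ratio $\log x/\log z$ is unchanged'', but the relevant ratio for the Fundamental Lemma applied to the new sequence of length $x' \le x/p + 1$ is $u' = \log(x/p)/\log z$, not $\log x/\log z$. Nothing in the statement forbids $p$ from being close to $x$; when it is, $x/p$ is bounded while $z$ is large, so $u'$ is tiny and the error term $\exp\bigl(-u'(\log u' - \cdots)\bigr)$ in Proposition~\ref{propbmho} is not $o(1)$. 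The Fundamental Lemma simply does not apply, and your upper bound is vacuous for large $p$.

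The repair is exactly the observation you made in passing and then discarded as a ``lower cutoff'': the compositeness of $L(n)$ built into $\Omega_1(x,z)$ is not there to supply an extra factor $(\log z)^{-1}$ --- you already get the full $(\log z)^{-k}$ from sieving $k$ forms --- but to guarantee that $x/p$ is large. Indeed, hypothesis~\eqref{epdandb} together with $\gcd(a,b)=1$ shows $L(n)$ has no prime factor in $\Sigma$, so $L(n)$ is genuinely squarefree; being also composite with every prime factor $\ge z$, the integer $L(n)/p$ must have a prime factor $\ge z$, whence $|L(n)/p| \ge z$ and therefore $x/p \ge z/M - 1$. The paper then lowers the sifting level to $z' = (z/M-1)^{1/50k}$, so that $x' = x/p \ge (z')^{50k}$ and Proposition~\ref{propbmho} applies with controlled error; choosing $z_3$ large in terms of $M$ ensures $z/M - 1 \ge z^{1/2}$, whence $(\log z')^{-k} \asymp (\log z)^{-k}$. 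This is the true source of the dependence of $z_3$ on $M$, not (as you suggest) the size of the coefficients of the shifted forms.
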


\begin{proof}
In this proof, every constant implied by $O(\cdot)$, $\ll$ etc. 
depends only on~$k$. 
We may assume that $L(n)$ is divisible by~$p$ 
for some ${n\in \Z}$ (otherwise there is nothing to prove).
 It follows that ${p\nmid a}$. (Indeed, if ${p\mid a}$ 
 then ${p\nmid b}$ because~$a$ and~$b$ are coprime, 
 and the congruence ${an\equiv- b \bmod p}$ is impossible.) 
Hence, there is a unique ${u\in \{0,1, \ldots, p-1\}}$ 
such that ${u\equiv -b/a\bmod p}$. 

For ${i=1,\ldots, k}$, set
$$
a_i'=
\begin{cases}
a_i, &p\mid L_i(u), \\
pa_i, &p\nmid L_i(u),
\end{cases}
\quad 
b_i'=
\begin{cases}
L_i(u)/p, &p\mid L_i(u), \\
L_i(u), &p\nmid L_i(u),
\end{cases}
$$
and write
$$
L_i'(n')= a_i'n'+b_i'.
$$
An immediate verification shows that \eqref{enondegeni},
 \eqref{enondegenij} and \eqref{esunit} remain true
 with~$a_i$,~ $b_i$ and~$\Sigma$ replaced by~$a_i'$,~$b_i'$ 
 and ${\Sigma'=\Sigma\cup\{p\}}$. Hence, defining 
 for ${x'\ge z'\ge 2}$,  the set 
\begin{equation*}
\Omega'(x',z')= \left\{0 \le n'\le x': \gcd\bigl(L_1'(n')\cdots L_k'(n'), 
~P_{\Sigma'}(z')\bigr)=1\right\},
\end{equation*}
we may apply Proposition~\ref{propbmho}: there exists~$z_0'$, 
depending 
only on~$k$ such that, when ${x'\ge (z')^{50k}}$ and 
${z'\ge z_0'}$, we have 
\begin{equation}
\label{emhoprime}
\#\Omega'(x',z')\ll2^{\#\Sigma'}\frac{x'}{(\log z')^k}
\ll2^{\#\Sigma}\frac{x'}{(\log z')^k}. 
\end{equation}

Every~$n$ with ${p\mid L(n)}$ can be written as ${u+n'p}$  
with ${n'\in \Z}$. If ${n\in \Omega(x,z)}$, then clearly
we have ${0\le n'\le x/p}$. Also 
$$
L_i(n)=
\begin{cases}
pL_i'(n'), &p\mid L_i(u), \\
L_i'(n'), &p\nmid L_i(u) 
\end{cases}
\qquad (i=1, \ldots, k). 
$$ 
It follows that the number of ${n\in \Omega(x,z)}$ 
such that ${p\mid L(n)}$ 
is bounded by ${\#\Omega'(x/p,z)}$.

Unfortunately, we cannot apply \eqref{emhoprime} with ${x'=x/p}$ 
and ${z'=z}$, because we do not have ${x'\ge (z')^{50k}}$. This is the 
main reason why we had to replace $\Omega(x,z)$ by $\Omega_1(x,z)$, 
because if ${n\in \Omega_1(x,z)}$ then we can bound $x/p$ from below. 

Indeed, let ${n\in \Omega_1(x,z)}$ be such that ${p\mid L(n)}$. 
By the definition of the set $\Omega_1(x,z)$, we know that 
$L(n)$ is composite and \eqref{epdandb} implies 
that $L(n)$ is not divisible by any 
primes from~$\Sigma$. Hence $L(n)/p$ must be divisible by 
some prime ${p'\ge z}$. In particular, ${|L(n)/p|\ge z}$, 
which implies that 
${x/p\ge z/M-1}$ 
(recall that ${M=\max\{|a|,|b|\}}$). Now setting ${x'=x/p}$ and 
${z'=(z/M-1)^{1/50k}}$, we obtain
\begin{align}
\#\{n\in \Omega_1(x,z):p\mid L(n)\}&\le \#\Omega'(x/p,z)\nonumber\\
&\le \#\Omega'(x',z')\nonumber\\
&\ll 2^{\#\Sigma}\frac{x'}{(\log z')^k}\nonumber\\
\label{eestim}
&\ll 2^{\#\Sigma}\frac{x/p}{(\log (z/M-1))^k}, 
\end{align}
provided 
\begin{equation}
\label{ezmmone}
(z/M-1)^{1/50k}\ge z_0'.
\end{equation} 
If we define ${z_3=\max \{M(z_0')^{50k}+M, 4M^2\}}$, 
then ${z\ge z_3}$ implies both \eqref{ezmmone} 
and ${z/M-1\ge z^{1/2}}$. Hence, 
the right-hand side of \eqref{eestim} is 
${O\bigl(2^{\#\Sigma}(x/p)(\log z)^{-k}\bigr)}$, as wanted.
\end{proof}

We will also need the following easy lemma. 

\begin{lemma}
\label{lloglog}
Let~$\cP$ be a set of prime numbers, 
${\eps\in (0,1/2]}$ and ${z_0\ge 2}$. 
Assume that for all ${t\ge z_0}$, we have 
${\pi_\cP(t)\le \eps\pi(t)}$. 
Then for ${x\ge z\ge z_0}$ we have 
$$
\sum_{\genfrac{}{}{0pt}{}{p\in \cP}{z\le p<x}}
\frac1p \ll \eps\log \left(\frac {\log x}{\log z}\right) +\eps,
$$
the implied constant being absolute. 
\end{lemma}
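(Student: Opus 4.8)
The plan is to estimate the sum $\sum_{z\le p<x,\,p\in\cP} 1/p$ by partial summation against the counting function $\pi_\cP(t)$, exploiting the hypothesis $\pi_\cP(t)\le\eps\pi(t)$ for $t\ge z_0$ together with the Chebyshev bound $\pi(t)\ll t/\log t$. First I would write, via Abel summation,
\[
\sum_{\genfrac{}{}{0pt}{}{p\in\cP}{z\le p<x}}\frac1p
= \frac{\pi_\cP(x^-)}{x} - \frac{\pi_\cP(z^-)}{z}
  + \int_z^x \frac{\pi_\cP(t)}{t^2}\,dt
\le \frac{\pi_\cP(x)}{x} + \int_z^x \frac{\pi_\cP(t)}{t^2}\,dt,
\]
where I drop the negative boundary term. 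The first term is $\le \eps\pi(x)/x \ll \eps/\log x \ll \eps$ by the Chebyshev bound (here $x\ge z\ge z_0\ge 2$, so $\log x$ is bounded below by an absolute positive constant). For the integral, I substitute $\pi_\cP(t)\le\eps\pi(t)\ll \eps t/\log t$, valid throughout the range since $t\ge z\ge z_0$, to get
\[
\int_z^x \frac{\pi_\cP(t)}{t^2}\,dt \ll \eps\int_z^x\frac{dt}{t\log t}
= \eps\bigl(\log\log x - \log\log z\bigr)
= \eps\log\!\left(\frac{\log x}{\log z}\right).
\]

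Combining the two contributions gives exactly $\sum_{z\le p<x,\,p\in\cP}1/p \ll \eps\log(\log x/\log z)+\eps$, with an absolute implied constant, as claimed. The only mild subtlety is keeping the constant absolute: the Chebyshev estimate $\pi(t)\ll t/\log t$ has an absolute constant, and the lower bound $\log x\ge \log 2$ is absolute, so no dependence on $\cP$, $z_0$, or $\eps$ creeps in; the factor $\eps$ is pulled out cleanly because the hypothesis is uniform in $t\ge z_0$ and we only ever use it for $t\ge z\ge z_0$.

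There is essentially no serious obstacle here — this is a routine partial-summation argument — but the one point requiring a little care is the treatment of the boundary terms and the behaviour near $t=z$: one must make sure $z\ge z_0$ so that the bound $\pi_\cP(t)\le\eps\pi(t)$ applies on the entire interval $[z,x]$, and one should note that when $x/z$ is bounded (e.g. $\log x/\log z$ close to $1$) the logarithmic term is harmless and the trailing $+\eps$ absorbs everything. I would also remark that one could alternatively split the sum dyadically over ranges $[2^j,2^{j+1})$ and bound each block by $\eps\pi(2^{j+1})/2^j \ll \eps/j$, then sum over the $O(\log(\log x/\log z))$ relevant blocks; this gives the same bound and makes the $\log(\log x/\log z)$ factor transparent as a count of dyadic blocks between $\log z$ and $\log x$ in the exponent.
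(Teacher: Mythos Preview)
Your proof is correct and follows essentially the same approach as the paper: Abel/partial summation to express the sum as boundary terms plus $\int_z^x \pi_\cP(t)/t^2\,dt$, then the hypothesis $\pi_\cP(t)\le\eps\pi(t)$ together with Chebyshev's bound $\pi(t)\ll t/\log t$ to obtain $\eps\log(\log x/\log z)+\eps$. The paper's version is a one-line computation; your write-up simply fills in the details the paper leaves implicit.
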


\begin{proof}
Using partial summation, we have
\begin{align*}
\sum_{\genfrac{}{}{0pt}{}{p\in \cP}{z\le p<x}}\frac1p&
~=\frac{\pi_\cP(x^-)}x-\frac{\pi_\cP(z^-)}z
+\int_z^x\frac{\pi_\cP(t)}{t^2}dt
~\ll \eps\log\left(\frac{\log x}{\log z}\right)+\eps,
\end{align*}
as wanted. 
\end{proof}

\begin{proof}[Proof of Theorem~\ref{thavoiding}]
Let~$\eta$,~$c_1$ and~$z_1$ be as in Theorem~\ref{thsieving}. 
Then for ${x\ge z_1^{1/\eta}}$, we have 
${\#\Omega_1\ge c_1x(\log x)^{-k}}$, 
where we denote ${\Omega_1=\Omega_1(x,x^\eta)}$. 

Now let~$\cP$ be a set of prime numbers, and let~$\Omega_2$ 
be the subset of~$\Omega_1$ consisting of ${n\in \Omega_1}$ such that 
some ${p\in \cP}$ divides ${L_1(n)\ldots L_k(n)}$. 
Also let~$z_3$ be as in Proposition~\ref{poneprime}.   
Define ${z_2\ge \max\{z_1,z_3\}}$ so large that for ${t\ge z_2}$, 
we have ${\pi_\cP(t)\le 2 \bar\delta(\cP) \pi(t)}$, and set 
${x_0=z_2^{1/\eta}}$.  Proposition~\ref{poneprime} and 
Lemma~\ref{lloglog} imply that for ${x\ge  x_0}$, we have 
\begin{align*}
\#\Omega_2&\ll \frac{x}{(\log x)^k}
\sum_{\genfrac{}{}{0pt}{}{p\in \cP}{x^\eta\le p<x}}\frac1p\\
&\ll  \bar\delta(\cP)\frac{x}{(\log x)^k} 
\left(\log \frac {\log x}{\log x^\eta}+1\right)\\
& \ll \bar\delta(\cP)\frac{x}{(\log x)^k}, 
\end{align*}
where the implicit constants depend on~$k$ and on~$\#\Sigma$. 

It follows that there exists ${\eps \in (0,1/2]}$, depending on~$k$ 
and on~$\#\Sigma$, such that, when 
${\bar\delta (\cP)\le \eps}$,  we have 
$$
\#\Omega_2 \le \frac12 c_1 \frac{x}{(\log x)^k} \le \frac12\#\Omega_1.
$$
This completes the proof of the theorem. 
\end{proof}

\section{Proof of Theorem~\ref{thnew} and Theorem~\ref{tcorold}}
\label{sproof}

Throughout the section, we assume that  
${f_1, \ldots, f_k}$ are primitive forms of square-free levels
(as defined in the beginning of Section~\ref{sintr}) of 
weights ${\kappa_1, \ldots, \kappa_k}$ 
respectively.  We also fix, once and for all, distinct positive 
integers ${\nu_1, \ldots, \nu_k}$ satisfying 
${a_{f_i}(\nu_i)\ne 0}$ for ${i=1, \ldots, k}$. 
We will assume that ${k \ge 2}$ as otherwise we
know that any non-zero primitive form
has infinitely many non-zero Fourier
coefficients (see Proposition \ref{lnup}).
Set ${K=\max\{\nu_1, \ldots, \nu_k\}}$.

\subsection{An application of the Chinese remainder theorem}

\begin{proposition}
\label{preplace}
Let $m \ge 1$ be such that
\begin{equation}
\label{ekdivn}
m \equiv 0\bmod{(2K)!}.
\end{equation}
There exists a positive real number $c_0$, depending 
on~$K$ and ${f_1, \ldots, f_k}$, such that for~$m$ 
satisfying \eqref{ekdivn} we have
\begin{equation}\label{nor}
c_0|\lambda_{f_i}(m_i)|
~\le~ |\lambda_{f_i}(m + \nu_i)|
~\le~  c_0^{-1}|\lambda_{f_i}(m_i)| \qquad (i=1,\ldots, k),
\end{equation}
and
\begin{equation}
\label{etaumi}
c_0 |\lambda_{f_i}(m_i)|
~\le~ \frac{|a_{f_i}(m + \nu_i)|}{m_{\vphantom{i}}^{(\ka_i -1)/2}}
~\le~  c_0^{-1}|\lambda_{f_i}(m_i)| \qquad (i=1,\ldots, k),
\end{equation}
where $m_i$ is defined by
\begin{equation}
\label{eni}
m + \nu_i= \nu_i m_i \qquad (i =1, \ldots, k).
\end{equation} 
\end{proposition}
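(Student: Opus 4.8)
The plan is to reduce both displayed chains of inequalities to the single multiplicative identity $\lambda_{f_i}(m+\nu_i)=\lambda_{f_i}(\nu_i)\,\lambda_{f_i}(m_i)$ for $i=1,\dots,k$, which holds as soon as $\gcd(\nu_i,m_i)=1$; establishing this coprimality is the one step that actually uses hypothesis \eqref{ekdivn}, and is the only part that is not pure bookkeeping. First I would observe that, since $\binom{2\nu_i}{\nu_i}$ is an integer, $(\nu_i!)^2$ divides $(2\nu_i)!$, and since $\nu_i\mid\nu_i!$ this gives $\nu_i^2\mid(\nu_i!)^2\mid(2\nu_i)!$; because $\nu_i\le K$, the quantity $(2K)!/(2\nu_i)!$ is a product of consecutive positive integers, hence an integer, so $\nu_i^2\mid(2K)!$ and thus $\nu_i^2\mid m$ by \eqref{ekdivn}. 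Writing $m=\nu_i^2\mu_i$ with $\mu_i$ a positive integer, the defining relation \eqref{eni} gives $m_i=m/\nu_i+1=\nu_i\mu_i+1\equiv 1\pmod{\nu_i}$, so $\gcd(m_i,\nu_i)=1$ (and, incidentally, $m_i$ is a genuine positive integer, so $\lambda_{f_i}(m_i)$ is well defined).

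Granting this, I would apply the multiplicativity \eqref{emult}: from $\gcd(\nu_i,m_i)=1$ and $m+\nu_i=\nu_i m_i$ we get $a_{f_i}(m+\nu_i)=a_{f_i}(\nu_i)\,a_{f_i}(m_i)$, and dividing both sides by $(\nu_i m_i)^{(\kappa_i-1)/2}=\nu_i^{(\kappa_i-1)/2}m_i^{(\kappa_i-1)/2}$ yields the identity above. Since $a_{f_i}(\nu_i)\ne 0$ we have $\lambda_{f_i}(\nu_i)\ne 0$, so passing to absolute values gives $|\lambda_{f_i}(m+\nu_i)|=|\lambda_{f_i}(\nu_i)|\,|\lambda_{f_i}(m_i)|$, which already yields \eqref{nor} once $c_0$ is chosen small enough. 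For \eqref{etaumi} I would use $|a_{f_i}(m+\nu_i)|=(m+\nu_i)^{(\kappa_i-1)/2}|\lambda_{f_i}(m+\nu_i)|$, so that
$$
\frac{|a_{f_i}(m+\nu_i)|}{m^{(\kappa_i-1)/2}}
=\bigl(1+\nu_i/m\bigr)^{(\kappa_i-1)/2}\,|\lambda_{f_i}(\nu_i)|\,|\lambda_{f_i}(m_i)|,
$$
and since $m\ge 1$ and $\nu_i\le K$ the extra factor $\bigl(1+\nu_i/m\bigr)^{(\kappa_i-1)/2}$ lies between $1$ and $(1+K)^{(\kappa_i-1)/2}$.

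To conclude I would set
$$
c_0=\min_{1\le i\le k}\min\bigl\{\,1/2,\ |\lambda_{f_i}(\nu_i)|,\ (1+K)^{-(\kappa_i-1)/2}|\lambda_{f_i}(\nu_i)|^{-1}\,\bigr\},
$$
a positive real number depending only on $K$, the weights $\kappa_i$, and the forms $f_i$ (the integers $\nu_i$ being fixed throughout the section), and then check that $c_0\le|\lambda_{f_i}(\nu_i)|\le c_0^{-1}$ gives \eqref{nor}, while combining $|\lambda_{f_i}(\nu_i)|\ge c_0$ with $(1+K)^{(\kappa_i-1)/2}|\lambda_{f_i}(\nu_i)|\le c_0^{-1}$ and the two-sided bound on $\bigl(1+\nu_i/m\bigr)^{(\kappa_i-1)/2}$ gives \eqref{etaumi}. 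The only genuine obstacle here is the coprimality assertion $\gcd(m_i,\nu_i)=1$; everything else is routine, so I would take care to present that step cleanly, for instance via the binomial coefficient as above.
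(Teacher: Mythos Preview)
Your proof is correct and follows essentially the same approach as the paper: establish $\gcd(\nu_i,m_i)=1$, apply multiplicativity to get $|\lambda_{f_i}(m+\nu_i)|=|\lambda_{f_i}(\nu_i)|\,|\lambda_{f_i}(m_i)|$, and choose $c_0$ accordingly. The only differences are cosmetic: the paper asserts the stronger fact that $m_i$ is coprime to $(2K)!$ (without the binomial-coefficient detour), and it bounds the factor $(1+\nu_i/m)^{(\kappa_i-1)/2}$ by $2^{(\kappa_i-1)/2}$ using $m\ge(2K)!\ge 2K$ rather than your $(1+K)^{(\kappa_i-1)/2}$.
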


\begin{proof}
It follows from \eqref{ekdivn} and the definition of $K$
that each $m_i$ is coprime to $(2K)!$. In particular,
\begin{equation}
\gcd(\nu_i,m_i)=1 \qquad (i=1,\ldots,k). 
\end{equation}
Since ${a_{f_i}(\nu_i)\ne 0}$ for ${i=1, \ldots, k}$, we may define 
$$
c_0 ~=~
\min_{1\le i\le k}\min\left\{ |\lambda_{f_i}(\nu_i)|,
~\frac{1}{2^{(\ka_i -1)/2}|\lambda_{f_i}(\nu_i)|}\right\}. 
$$
Hence, by multiplicativity, we have
$$
\frac{|a_{f_i}(m+\nu_i)|}{m^{(\ka_i -1)/2}}\ge
 |\lambda_{f_i}(m+\nu_i)| = 
|\lambda_{f_i}(\nu_i)|
|\lambda_{f_i}(m_i)| \ge 
c_0 |\lambda_{f_i}(m_i)| 
$$
and 
$$
 |\lambda_{f_i}(m+\nu_i)|
 =|\lambda_{f_i}(\nu_i)| |\lambda_{f_i}(m_i)| \le  
c_0^{-1} |\lambda_{f_i}(m_i)|.
$$
This completes the proof of \eqref{nor}.
Since ${m \ge 2K}$, we have 
$$
{(m + \nu_i)^{(\ka_i -1)/2}
\le 2^{(\ka_i-1)/2}m^{(\ka_i -1)/2}}
$$ 
and then again by multiplicativity, one has
\begin{align*}
\frac{|a_{f_i}(m+\nu_i)|}{m^{(\ka_i -1)/2}} 
&\le 2^{(\ka_i -1)/2} |\lambda_{f_i}(m+\nu_i)| \\
&= 2^{(\ka_i -1)/2}|\lambda_{f_i}(\nu_i)|
|\lambda_{f_i}(m_i)|\\
&\le c_0^{-1} |\lambda_{f_i}(m_i)|. 
\end{align*}
This completes the proof of \eqref{etaumi}.
\end{proof}

\subsection{Sieving and Sato-Tate}
 
Next we choose primes ${p_1<\cdots<p_k}$ with ${p_1> 2K}$ 
such that  
\begin{align}
\label{eallntwop}
\lambda_{f_i}(p_i)&\ne \pm2 && (i=1,\ldots,k),\\ 
\label{eallnze}
\lambda_{f_i}(p_i^\ell)&\ne 0 && (i=1,\ldots,k, \quad \ell=1,2,\ldots).
\end{align}
Existence of such primes is guaranteed by 
Propositions~\ref{prsatotate} and~\ref{prtaulnzer}. 

Let ${\ell_1, \ldots, \ell_k}$ be positive integers 
which will be specified later. We now impose on~$m$, 
besides \eqref{ekdivn}, the conditions 
\begin{equation}
\label{emmodplam}
m + \nu_i \equiv p_i^{\ell_i}\bmod p_i^{\ell_i+1} \qquad (i=1, \ldots, k). 
\end{equation}
Together with \eqref{ekdivn} this puts~$m$ into an arithmetic 
progression modulo~$A$, where 
\begin{equation}
\label{eq:7}
A=(2K)!\prod_{i=1}^k p_i^{\ell_i+1}. 
\end{equation}
Write ${m=An+B}$, where ${B<A}$ is the smallest 
positive integer in this progression. Here, ${n\ge 0}$ is 
some non-negative integer. Then 
${m+\nu_i=\nu_ip_i^{\ell_i} (a_in +b_i)}$, where 
\begin{equation}\label{new}
a_i=\frac A{\nu_ip_i^{\ell_i}}, \quad b_i
=
\frac{B+\nu_i}{\nu_ip_i^{\ell_i}} \qquad (i=1, \ldots, k)
\end{equation}
are positive integers\footnote{There is no risk of confusing the 
Hecke eigenvalues $a_{f_i}(n)$ and the integers~$a_i$.}. 
In particular, the numbers~$m_i$ defined in \eqref{eni}
are given by 
\begin{equation}
\label{emi}
m_i=p_i^{\ell_i} L_i(n), \qquad (i=1, \ldots, k), 
\end{equation}
where ${L_i(n)=a_in+b_i}$.

Note that
\begin{align*}
&\gcd(A,B+\nu_i)=\nu_ip_i^{\ell_i}
&& (i=1, \ldots, k),\\
&
a_ib_j-a_jb_i
=
\frac A{\nu_i\nu_jp_i^{\ell_i}p_j^{\ell_j}}(\nu_j-\nu_i)
&&(1\le i,j\le k). 
\end{align*}
In particular, it follows that the integers
$a_1, \ldots, a_k, b_1, \ldots, b_k$ defined in \eqref{new} 
satisfy  \eqref{enondegeni} and \eqref{enondegenij}. Moreover, setting 
$$
\Sigma=\{p\le 2K\}\cup\{p_1, \ldots, p_k\},
$$
conditions \eqref{esunit} and \eqref{esigdiva} hold true as well, 
which allows us to apply our sieving Theorems~\ref{thsieving} 
and~\ref{thavoiding}. Using them  and the Sato-Tate conjecture 
(as stated in Proposition~\ref{prsatotate}), we obtain the following.

\begin{proposition}
\label{plis}
There exists a positive number~$c_1$, depending on~$K$
and on the forms~$f_i$ such that there exist infinitely many positive 
integers~$n$ with the following property
\begin{equation}
\label{etauli}
c_1~\le~ |\lambda_{f_i}(L_i(n))|
~\le~ c_1^{-1} \qquad (i=1, \ldots, k). 
\end{equation}
\end{proposition}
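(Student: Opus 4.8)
\textbf{Proof proposal for Proposition~\ref{plis}.}
The plan is to combine the two sieving results of Section~\ref{savoiding} with the Sato--Tate input from Proposition~\ref{prsatotate}. The key observation is that, by \eqref{emi} and multiplicativity, $|\lambda_{f_i}(m_i)| = |\lambda_{f_i}(p_i^{\ell_i})|\cdot|\lambda_{f_i}(L_i(n))|$ whenever $\gcd(p_i^{\ell_i}, L_i(n)) = 1$, and by \eqref{eallnze} the factor $|\lambda_{f_i}(p_i^{\ell_i})|$ is a fixed nonzero constant; so controlling $|\lambda_{f_i}(L_i(n))|$ from below is what matters, and since $\lambda_{f_i}$ is bounded by $2^{\omega(\cdot)}$ (Deligne), an upper bound is automatic once $L_i(n)$ is squarefree with a bounded number of prime factors. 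Thus the heart of the matter is a lower bound $|\lambda_{f_i}(L_i(n))| \ge c_1$.

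First I would invoke Theorem~\ref{thsieving} with the data $a_i, b_i, \Sigma$ set up just above the proposition, which produces $\eta, c_1', z_1$ so that $\#\Omega_1(x, x^\eta) \ge c_1' x (\log x)^{-k}$ for $x$ large; here every $L_i(n)$ is a squarefree composite number all of whose prime factors exceed $x^\eta$, hence each $L_i(n)$ has at most $1/\eta$ prime factors, a bound depending only on $k$ and $\#\Sigma$. Next, for each $i$ and each small real $\varepsilon > 0$, let $\cP_i(\varepsilon)$ be the set of primes $p$ with $|\lambda_{f_i}(p)| \le \varepsilon$; by Proposition~\ref{prsatotate}\ref{ieps} we have $\bar\delta(\cP_i(\varepsilon)) \le \varepsilon$. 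Applying Theorem~\ref{thavoiding} to the union $\cP = \bigcup_i \cP_i(\varepsilon)$ (which has $\bar\delta(\cP) \le k\varepsilon$) with $\varepsilon$ chosen small enough that $k\varepsilon$ is below the threshold of that theorem, we find that for $x \ge x_0$ at least half of the elements $n \in \Omega_1(x, x^\eta)$ satisfy $p \nmid L_1(n)\cdots L_k(n)$ for every $p \in \cP$. For such $n$, every prime factor $p$ of every $L_i(n)$ satisfies $|\lambda_{f_i}(p)| > \varepsilon$, so $|\lambda_{f_i}(L_i(n))| = \prod_{p \mid L_i(n)} |\lambda_{f_i}(p)| > \varepsilon^{1/\eta}$, giving the desired lower bound with $c_1$ roughly $\min\{\varepsilon^{1/\eta}, \ldots\}$ times the fixed constants $|\lambda_{f_i}(p_i^{\ell_i})|$; the upper bound $c_1^{-1}$ follows from $|\lambda_{f_i}(L_i(n))| \le 2^{\nu(L_i(n))} \le 2^{1/\eta}$. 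Since $\#\Omega_1(x, x^\eta) \to \infty$, there are infinitely many such $n$.

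The main obstacle, and the reason the two-step structure is needed, is the interaction between the sifting level and the ``bad primes'' for which $|\lambda_{f_i}(p)|$ is small: the sieve of Theorem~\ref{thsieving} forces every prime factor of $L_i(n)$ to be large (bigger than $x^\eta$), but Sato--Tate only tells us bad primes are \emph{sparse}, not absent, so a positive (though tiny) proportion of the sieved $n$ could still pick up a bad large prime factor and have $|\lambda_{f_i}(L_i(n))|$ uncontrollably small. Theorem~\ref{thavoiding} is precisely the device that discards those $n$ while retaining a positive proportion of $\Omega_1$, and the one delicate point is checking that the constants $\eta$ (from Theorem~\ref{thsieving}) and $\varepsilon$ (the density threshold from Theorem~\ref{thavoiding}) can be chosen consistently — but both depend only on $k$ and $\#\Sigma$, and $\#\Sigma = \#\{p \le 2K\} + k$ depends only on $K$ and $k$, so there is no circularity. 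One should also note that $\ell_1, \ldots, \ell_k$ have not yet been specified at this stage; the bound in Proposition~\ref{plis} is uniform in any fixed choice of them, since they only affect $\Sigma$ through the fixed set $\{p_1, \ldots, p_k\}$ and affect the constants $|\lambda_{f_i}(p_i^{\ell_i})|$, which by \eqref{eallnze} stay bounded away from $0$ — although for a fully uniform statement one may simply note that these constants will be pinned down later when the $\ell_i$ are chosen, absorbing them into $c_1$ at that point.
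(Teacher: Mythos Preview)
Your core argument in the second paragraph is correct and is essentially the paper's proof: sieve with Theorems~\ref{thsieving} and~\ref{thavoiding}, use Sato--Tate (Proposition~\ref{prsatotate}) to bound the density of the ``bad'' primes, and then multiply the per-prime bounds $\eps/k < |\lambda_{f_i}(p)| \le 2$ over the at most $O(1/\eta)$ prime factors of each $L_i(n)$. The paper does exactly this, writing $c_1 = (\eps/k)^{2/\eta}$ and $c_1^{-1} = 2^{2/\eta}$ (the $2/\eta$ rather than $1/\eta$ comes from passing from $p\ge n^\eta$ to $p\ge L_i(n)^{\eta/2}$, a harmless adjustment you glossed over).

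There is, however, a genuine confusion in your first and third paragraphs that you should excise: the constants $|\lambda_{f_i}(p_i^{\ell_i})|$ have \emph{no business} appearing in $c_1$. The proposition bounds $|\lambda_{f_i}(L_i(n))|$ only, and the correct $c_1$ depends solely on $\eta$ and $\eps$, which in turn depend only on $k$ and $\#\Sigma = \pi(2K)+k$, hence only on $K$. This independence from the $\ell_i$ is not a cosmetic point but is \emph{crucial} for the global argument: in Subsection~5.4 the exponents $\ell_i$ are chosen in terms of $\delta = (c_0 c_1)^2/2$, so if $c_1$ depended on the $\ell_i$ the construction would be circular. Your suggested fix (``absorbing them into $c_1$ at that point'') does not escape this circularity, and your claim that \eqref{eallnze} keeps the $|\lambda_{f_i}(p_i^{\ell_i})|$ bounded away from~$0$ uniformly in $\ell_i$ is simply false --- indeed Lemma~\ref{nle} shows they can be made arbitrarily small, which is precisely what Proposition~\ref{plambdas} exploits. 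Drop all mention of $m_i$ and $p_i^{\ell_i}$ from this proof; they enter only later, in the concluding subsection.
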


\begin{proof}
Let~$\eta$ and $\eps$ be as in Theorem~\ref{thsieving} 
and Theorem~\ref{thavoiding}
respectively. Both depend on~$k$ and~$\#\Sigma$, but since 
${\#\Sigma=\pi(2K)+k}$, this translates into dependence on~$K$.

Now let~$\cP_\eps$ be the set of prime numbers~$p$ 
such that for some ${i\in \{1, \ldots, k\}}$ we have 
${|\lambda_{f_i}(p)| \le \eps/k}$. 
Proposition~\ref{prsatotate} implies that its relative density 
is at most~$\eps$. Now Theorem~\ref{thsieving} and 
Theorem~\ref{thavoiding} 
together imply that there exist infinitely many  positive integers~$n$ 
with the following properties:

\begin{enumerate}

\item
each $L_i(n)$ is a square-free positive 
integer;

\item
\label{ilower}
for ${i=1, \ldots, k}$, every prime ${p\mid L_i(n)}$ 
satisfies ${p\ge n^{\eta}}$; 

\item
for ${i=1, \ldots, k}$,  every prime ${p\mid L_i(n)}$ satisfies 
$|\lambda_{f_i}(p)| > \eps/k$.

\end{enumerate}

After discarding finitely many numbers~$n$, 
item~\ref{ilower} implies that 
\begin{enumerate}
\item[\ref{ilower}${}'$.]
for ${i=1, \ldots, k}$, every prime ${p\mid L_i(n)}$ 
satisfies ${p\ge L_i(n)^{\eta/2}}$. 
\end{enumerate}
Hence, each $L_i(n)$ has at most ${2/\eta}$ prime divisors. 
Write ${L_i(n)=q_1\cdots q_s}$, where ${s\le 2/\eta}$ and 
${q_1, \ldots, q_s}$ are distinct prime numbers satisfying 
$$
\frac\eps k 
~<~  |\lambda_{f_i}(q_j)|
~\le~ 2 \qquad (j=1,\ldots, s). 
$$
The inequality on the right is by Deligne's bound \eqref{edel}. 
By multiplicativity, we now obtain 
$$
\left(\frac\eps k\right)^{2/\eta}
~\le~ 
|\lambda_{f_i}(L_i(n))|
~\le~ 2^{2/\eta}.
$$
This completes the proof.
\end{proof}

\begin{remark}
\label{resatotate}
Slightly modifying the above argument, one proves the following 
quantitative result: there exist ${c_2>0}$ (depending on~$K$) and  
 ${x_0\ge2}$ (depending on~$K$, on the forms~$f_i$ and on our choice of the 
 primes~$p_i$ and the exponents~$\ell_i$) such that for ${x\ge x_0}$ the number of 
${n\le x}$ with the property \eqref{etauli} is at least $c_2 x(\log x)^{-k}$. 
The constant~$c_2$  is effective, but~$x_0$ is not,  
because it depends on a ``quantitative'' form of the Sato-Tate conjecture, 
which is not known to be effective (to the best of our knowledge). 
\end{remark}

\subsection{The Exponents~$\ell_i$}
We now fix a small parameter~$\delta >0$ (to be specified later) 
and define, in terms of this~$\delta$,  our ${\ell_1, \ldots, \ell_k}$. 

\begin{proposition}
\label{plambdas}
Let~$\delta$ be a positive real number. Then there exist positive 
integers ${\ell_1, \ldots, \ell_k}$ such that 
\begin{equation}
\label{elambdas}
\left|\lambda_{f_1}\bigl(p_1^{\ell_1}\bigr)\right| 
< \delta \left|\lambda_{f_2}\bigl(p_2^{\ell_2}\bigr)\right|
<\ldots 
<\delta^{k-1}\left|\lambda_{f_k}\bigl(p_k^{\ell_k}\bigr)\right|.
\end{equation}
\end{proposition}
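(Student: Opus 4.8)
The plan is to choose each exponent $\ell_i$ so that $\bigl|\lambda_{f_i}(p_i^{\ell_i})\bigr|$ is small, and then to arrange that the successive quantities differ multiplicatively by a factor that beats the chosen~$\delta$. The key input is the sine formula \eqref{esines}: since $\lambda_{f_i}(p_i) \ne \pm 2$ by \eqref{eallntwop}, writing $\theta_i = \theta_{p_i} \in (0,\pi)$ we have
\[
\lambda_{f_i}(p_i^{\ell}) = \frac{\sin((\ell+1)\theta_i)}{\sin\theta_i}.
\]
By \eqref{eallnze} this never vanishes, so $(\ell+1)\theta_i/\pi \notin \Z$ for all $\ell$; in particular Proposition~\ref{prtaulnzer} tells us that, after possibly discarding finitely many primes at the outset, $\theta_i/\pi$ is irrational. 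The plan is to exploit Weyl equidistribution: for irrational $\theta_i/\pi$, the sequence $\{(\ell+1)\theta_i/\pi \bmod 1\}_{\ell\ge 1}$ is dense (indeed equidistributed) in $[0,1]$, so $\bigl|\sin((\ell+1)\theta_i)\bigr|$ comes arbitrarily close to~$0$. Hence for each $i$ and each target $\eta_i > 0$ there are infinitely many $\ell$ with $0 < \bigl|\lambda_{f_i}(p_i^{\ell})\bigr| < \eta_i$.

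First I would fix an auxiliary quantity $m_0 := \min_{1\le i\le k} \min_{\ell\ge 1} \bigl|\lambda_{f_i}(p_i^{\ell})\bigr|$, which is a well-defined positive real number: the infimum over $\ell$ is not attained at $0$ since all these values are nonzero, but one must check the infimum itself is positive — this follows because $|\sin((\ell+1)\theta_i)| \ge c/\ell$ is \emph{false} in general, so instead I would simply note that for the purposes of the proposition we only need a positive lower bound on \emph{finitely many} relevant values, which I now set up. Concretely, I proceed greedily from the top index down. Pick $\ell_k = 1$ (any fixed choice works), and set $\Lambda_k = \bigl|\lambda_{f_k}(p_k^{\ell_k})\bigr| > 0$. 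Having chosen $\ell_{i+1}, \ldots, \ell_k$ and hence $\Lambda_{i+1} = \bigl|\lambda_{f_{i+1}}(p_{i+1}^{\ell_{i+1}})\bigr| > 0$, use the density statement above to choose $\ell_i$ so that
\[
0 < \bigl|\lambda_{f_i}(p_i^{\ell_i})\bigr| < \delta\,\Lambda_{i+1},
\]
and set $\Lambda_i = \bigl|\lambda_{f_i}(p_i^{\ell_i})\bigr|$. This is possible because $\delta\Lambda_{i+1} > 0$ and infinitely many $\ell$ make the left-hand quantity smaller than any prescribed positive bound. Carrying this out for $i = k-1, k-2, \ldots, 1$ produces positive integers $\ell_1, \ldots, \ell_k$ with
\[
\Lambda_1 < \delta\Lambda_2, \quad \Lambda_2 < \delta\Lambda_3, \quad \ldots, \quad \Lambda_{k-1} < \delta\Lambda_k,
\]
and chaining these inequalities gives exactly $\Lambda_1 < \delta\Lambda_2 < \delta^2\Lambda_3 < \cdots < \delta^{k-1}\Lambda_k$, which is \eqref{elambdas}.

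The main obstacle is purely the density/equidistribution input for the sine values, and arranging it cleanly: one needs $\theta_{p_i}/\pi \notin \Q$ to guarantee that $\bigl|\sin((\ell+1)\theta_{p_i})\bigr|$ gets arbitrarily small, and this is where Proposition~\ref{prtaulnzer} is used — it ensures that for all but finitely many primes $p$, either $\lambda_f(p) \in \{0,\pm2\}$ or $\theta_p/\pi$ is irrational; since we have already imposed $\lambda_{f_i}(p_i) \ne \pm 2$ and $\lambda_{f_i}(p_i^\ell) \ne 0$, the only surviving case is $\theta_{p_i}/\pi \notin \Q$, provided the primes $p_i$ were chosen outside the finite exceptional set in Proposition~\ref{prtaulnzer} (which we are free to do when selecting $p_1 < \cdots < p_k$). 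Everything else is a finite greedy construction with no quantitative subtleties, since we only ever need each $\Lambda_i$ to be positive (which it is) and each successive one to be makeable arbitrarily small (which the irrationality guarantees).
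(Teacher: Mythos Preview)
Your proof is correct and follows essentially the same route as the paper: make $|\lambda_{f_i}(p_i^{\ell})|$ arbitrarily small via the sine formula~\eqref{esines} and the irrationality of $\theta_{p_i}/\pi$, then build $\ell_1,\ldots,\ell_k$ greedily from the top down starting with $\ell_k=1$. Two minor remarks: the paragraph about $m_0$ is a dead end you rightly abandon and should simply be deleted; and the irrationality of $\theta_{p_i}/\pi$ already follows directly from~\eqref{eallnze} (if $\theta_{p_i}/\pi=a/b$ then $\ell=b-1$ gives a vanishing sine), so the appeal to Proposition~\ref{prtaulnzer} is not needed at this point --- the paper's Lemma~\ref{nle} even allows the rational case, since then some $\lambda_f(p^\ell)=0<\eps$.
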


We start with an easy lemma.

\begin{lemma}\label{nle}
Let~$f$ be a primitive form of weight~$\kappa$, let~$p$ be 
a prime number such that ${\lambda_f(p)\ne \pm2 }$, 
and let~$\eps$ a positive real number. Then there 
exists a positive integer~$\ell$ such that 
${|\lambda_f(p^\ell)|<\eps}$.
\end{lemma}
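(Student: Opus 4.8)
The plan is to show that the values $|\lambda_f(p^\ell)|$ for $\ell = 1, 2, \ldots$ come arbitrarily close to $0$, which immediately gives an $\ell$ with $|\lambda_f(p^\ell)| < \eps$. The crucial point is that the hypothesis $\lambda_f(p) \ne \pm 2$ means $\theta_p \in (0,\pi)$, so formula \eqref{esines} applies and we have $\lambda_f(p^\ell) = \sin((\ell+1)\theta_p)/\sin\theta_p$ for all $\ell \ge 1$. Thus it suffices to find $\ell$ making $|\sin((\ell+1)\theta_p)|$ small, i.e.\ making $(\ell+1)\theta_p$ close to an integer multiple of $\pi$, equivalently making $(\ell+1)\theta_p/\pi$ close to an integer.

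The argument then splits according to whether $\theta_p/\pi$ is rational or irrational. If $\theta_p/\pi = r/s$ in lowest terms with $s \ge 2$ (the case $s = 1$ is excluded since $\theta_p \ne 0, \pi$), then taking $\ell + 1 = s$ gives $(\ell+1)\theta_p = r\pi$, so $\sin((\ell+1)\theta_p) = 0$ and in fact $\lambda_f(p^{s-1}) = 0 < \eps$. If $\theta_p/\pi$ is irrational, then by the classical equidistribution (or merely density) of the sequence $\{(\ell+1)\theta_p/\pi \bmod 1\}_{\ell \ge 1}$ — Weyl's theorem, or just the pigeonhole/Dirichlet argument that the fractional parts of multiples of an irrational are dense in $[0,1]$ — there are infinitely many $\ell$ with $(\ell+1)\theta_p/\pi$ within any prescribed distance of an integer. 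Choosing the distance small enough that $|\sin((\ell+1)\theta_p)| < \eps \sin\theta_p$ yields $|\lambda_f(p^\ell)| < \eps$ for such an $\ell$. One could even avoid the case distinction entirely by just invoking that $\{n\theta_p/\pi \bmod 1\}$ has $0$ as a limit point whenever $\theta_p/\pi \notin \Z$, but separating the rational case makes the conclusion slightly cleaner and connects with \eqref{ezeropi} and the discussion preceding Proposition~\ref{prtaulnzer}.

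I do not anticipate a genuine obstacle here; this is a soft lemma whose entire content is the elementary fact that multiples of a real number either hit a subgroup point (rational case) or are dense modulo $\pi$ (irrational case), combined with the explicit trigonometric formula \eqref{esines} that Deligne's bound makes available. The only mild care needed is to note that $\sin\theta_p \ne 0$ (guaranteed by $\theta_p \ne 0,\pi$) so that dividing by it is legitimate, and to observe that $\ell$ is required to be a \emph{positive} integer, which is automatic since we take $\ell + 1 \ge 2$.
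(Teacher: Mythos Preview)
Your proof is correct and follows essentially the same route as the paper: use the formula \eqref{esines} to write $|\lambda_f(p^\ell)| = |\sin((\ell+1)\theta_p)|/|\sin\theta_p|$, then exploit the density of the multiples of $\theta_p/\pi$ modulo~$1$ to make the numerator small. The paper's version is simply terser---it dismisses the rational case with ``otherwise there is nothing to prove'' and handles the irrational case in one sentence---whereas you spell out both cases explicitly; but the argument is the same.
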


\begin{proof}
We may assume ${\theta_p/\pi\notin\Q}$ as otherwise there 
is nothing to prove. 
Using \eqref{esines}, we know that
$$
|\lambda_f(p^\ell)|
= 
\frac{|\sin((\ell+1)\theta_p)|}{|\sin\theta_p|}. 
$$
Since ${\theta_p/\pi\notin\Q}$,  selecting~$\ell$ suitably, 
we can make ${|\sin((\ell+1)\theta_p)|}$ as small as we please. 
\end{proof}

\begin{corollary}
\label{colmulam}
Let $f,g$ be  primitive forms of weights $\kappa,\rho$, 
respectively, and let $p, q$  be prime numbers.
Also let~$\ell'$ be a positive integer and~$\delta$ be a 
positive real number. Assume that ${\lambda_f(p)\ne \pm2}$ 
and ${a_g(q^{\ell'})\ne 0}$.  Then there 
exists a positive integer~$\ell$ such that 
$$
|\lambda_f(p^\ell)|
~<~ 
\delta |\lambda_g(q^{\ell'})|.
$$
\end{corollary}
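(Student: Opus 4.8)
The plan is to deduce this immediately from Lemma~\ref{nle}; the corollary is essentially just a rescaled restatement of that lemma. First I would check that the quantity $\delta|\lambda_g(q^{\ell'})|$ on the right-hand side is a strictly positive real number. Indeed, by the normalization $\lambda_g(q^{\ell'})=a_g(q^{\ell'})/q^{\ell'(\rho-1)/2}$, and the denominator $q^{\ell'(\rho-1)/2}$ is positive, so the hypothesis $a_g(q^{\ell'})\ne0$ forces $\lambda_g(q^{\ell'})\ne0$; since also $\delta>0$, we may set $\eps:=\delta|\lambda_g(q^{\ell'})|>0$.

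Next I would apply Lemma~\ref{nle} to the form $f$, the prime $p$, and this particular value of $\eps$. The hypothesis of that lemma, $\lambda_f(p)\ne\pm2$, is exactly what is assumed here, so the lemma produces a positive integer $\ell$ with $|\lambda_f(p^\ell)|<\eps=\delta|\lambda_g(q^{\ell'})|$, which is precisely the asserted inequality.

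There is no real obstacle: the only point requiring attention is confirming that the target bound is positive (so that Lemma~\ref{nle} is applicable at all), and this reduces to the standing hypothesis $a_g(q^{\ell'})\ne0$ together with the trivial observation that dividing by a positive power of $q$ does not create or destroy vanishing. All the substance sits inside Lemma~\ref{nle}, whose proof in turn rests on the case split according to whether $\theta_p/\pi$ is rational and, in the irrational case, on the equidistribution (or merely density near $0$) of the sequence $\{(\ell+1)\theta_p \bmod 2\pi\}$ via~\eqref{esines}.
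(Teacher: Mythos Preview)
Your proof is correct and follows exactly the same approach as the paper, which simply says ``Apply Lemma~\ref{nle} with ${\eps =\delta|\lambda_g(q^{\ell'})|}$.'' Your additional verification that $\eps>0$ (via $a_g(q^{\ell'})\ne 0$ and $\delta>0$) is a reasonable clarification of a point the paper leaves implicit.
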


\begin{proof}
Apply
Lemma~\ref{nle} with 
${\eps =\delta|\lambda_g(q^{\ell'})|}$. 
\end{proof}

\begin{proof}[Proof of Proposition~\ref{plambdas}]
Set ${\ell_k=1}$ and afterwards define 
${\ell_{k-1}, \ldots, \ell_1}$ 
iteratively by applying Corollary~\ref{colmulam} $(k-1)$-times. The hypothesis of 
Corollary~\ref{colmulam} is assured because of \eqref{eallntwop} and \eqref{eallnze}. 
\end{proof}

\begin{remark}
Using Baker's theory of logarithmic forms, it is possible to 
prove that one can find suitable ${\ell_1, \ldots, \ell_k}$ 
effectively bounded in terms of ${f_1, \ldots, f_k}$ and~$\delta$. We do not go into 
details since we do not need this. 
\end{remark}

\subsection{Conclusion}

Now we are ready to prove Theorem~\ref{thnew}
and Theorem~\ref{tcorold}. Let~$c_0$ 
and~$c_1$ be as in Proposition~\ref{preplace} and 
Proposition~\ref{plis} respectively. 
Set ${\delta=(c_0c_1)^2/ 2}$ and 
define the exponents ${\ell_1, \ldots, \ell_k}$ as in 
Proposition~\ref{plambdas}. (It is crucial here that 
~$c_0$ and~$c_1$ depend only on~$K$
but not on the exponents $\ell_i$.)  
Now if~$n$ is one of the infinitely many positive integers 
satisfying property \eqref{etauli}, then in the set-up of Theorem~\ref{thnew} the corresponding 
${m=An+B}$ satisfies 
\begin{equation*}
|\lambda_{f_1}(m+\nu_1)|\le \frac12|\lambda_{f_2}(m+\nu_2)|\le\cdots 
\le 
\frac1{2^{k-1}}|\lambda_{f_k}(m+\nu_k)|
\end{equation*}
as follows from \eqref{nor}, \eqref{emi}, 
\eqref{etauli} and \eqref{elambdas}. 
In the set-up of Theorem~\ref{tcorold} it satisfies
\begin{equation*}
|a_{f}(m+\nu_1)|\le \frac12|a_{f}(m+\nu_2)|\le\cdots 
\le 
\frac1{2^{k-1}}|a_{f}(m+\nu_k)|, 
\end{equation*}
as follows from \eqref{etaumi} (with ${f_1=\ldots=f_k=f}$), \eqref{emi}, \eqref{etauli} 
and \eqref{elambdas}. This completes the proof
of Theorem~\ref{thnew} and Theorem~\ref{tcorold}.

\begin{remark}
\label{requant}
As Remark~\ref{resatotate} suggests, we actually obtain the following 
quantitative results: for sufficiently large~$x$, there is at least 
${cx(\log x)^{-k}}$ positive integers ${m\le x}$ with the 
property \eqref{etausnu} and \eqref{etausnu1}
; here ${c=c(K, f_1, \ldots, f_k)>0}$ is effective 
and ``sufficiently large'' is not effective. 
\end{remark}

\section{Proof of Theorem~\ref{thdes}}
\label{sdes}

In this section~$k$ is a positive integer, and~$f$ is a 
primitive form of square-free level, 
as defined in the beginning of Section~\ref{sintr}.   
We want to show that the three conditions 
\ref{inezhalf}, \ref{inezk} and \ref{iperm} are equivalent. 
We will assume that ${k \ge 2}$ as otherwise we
know that any non-zero primitive form
has infinitely many non-zero Fourier
coefficients (see Proposition \ref{lnup}).
Condition~\ref{iperm} trivially implies~\ref{inezk}, and~\ref{inezk} 
implies~\ref{iperm} by putting 
$$
\nu_i=\nu+\sigma(i)\qquad (1\le i\le k)
$$
in Theorem~\ref{tcorold}. 

The implication \ref{inezk}$\Rightarrow$\ref{inezhalf} is easy. One 
readily sees that \eqref{etnezhalf} is equivalent to the following:
\begin{equation}\label{eCNSbis}
\text{$a_f\left(p^\ell\right) \neq 0$  for every prime $p$ and 
positive $\ell$
with  $p^\ell \le k/2$.}
\end{equation}
We will check \eqref{eCNSbis}; let $p$ and $\ell$ be such that
$p^\ell \le k/2$. Since ${k \ge 2p^\ell}$,
the set $\{\nu+1, \ldots, \nu+k\}$ contains at least two consecutive multiples 
of~$p^\ell$ and so one of them, say ${\nu+h}$, is divisible by $p^\ell$ but not
by $p^{\ell+1}$. Since $a_f$ is multiplicative and ${a_f(\nu+h) \neq 0}$, 
we have ${a_f(p^\ell)\neq 0}$.

We are left with the implication \ref{inezhalf}$\Rightarrow$\ref{inezk}. 
We deduce it from Theorems~\ref{thsieving} and~\ref{thavoiding} 
with the help of the following lemma.

\begin{lemma}
\label{lnup}
Let~$f$ be a primitive form of square-free level
$N$. For every prime number~$p$ there exist infinitely 
many integers $\ell$ such that
\begin{equation*}
a_f\left(p^\ell\right) \neq 0.
\end{equation*}
\end{lemma}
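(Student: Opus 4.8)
\textbf{Proof plan for Lemma~\ref{lnup}.}
The plan is to distinguish according to whether $p\mid N$ or $p\nmid N$, and in each case exploit the multiplicative structure of $a_f$ and the recurrences \eqref{erecu}. If $p\mid N$, then by \eqref{erecu} we have $a_f(p^{\ell+1})=a_f(p)^{\ell+1}$ for all $\ell$, so it suffices to know $a_f(p)\ne 0$. For square-free level, $a_f(p)^2=p^{\kappa-2}$ for primes dividing $N$ (this is the standard fact that the eigenvalue of the Atkin--Lehner--type operator $U_p$ has absolute value $p^{(\kappa-2)/2}$ for $p\|N$), hence $a_f(p)\ne 0$ and the conclusion is immediate for \emph{every} exponent $\ell$.

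The substantive case is $p\nmid N$. Here I would argue by contradiction: suppose $a_f(p^\ell)=0$ for all sufficiently large $\ell$, or even just for all $\ell$ in an infinite set. Using the normalized quantities, $\lambda_f(p^\ell)=0$ forces, via \eqref{esines} and \eqref{ezeropi}, that $\theta_p\ne 0,\pi$ and $\sin((\ell+1)\theta_p)=0$, i.e. $(\ell+1)\theta_p\in\pi\Z$. If this holds for two consecutive-in-value exponents, or more generally for $\ell_1+1$ and $\ell_2+1$ with $\ell_1\ne\ell_2$, then $(\ell_1-\ell_2)\theta_p\in\pi\Z$, so $\theta_p/\pi\in\Q$; write $\theta_p/\pi=a/b$ in lowest terms with $b\ge 2$. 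Then $\lambda_f(p^\ell)=\sin((\ell+1)a\pi/b)/\sin(a\pi/b)$, which vanishes precisely when $b\mid \ell+1$, and is nonzero (indeed equals $\pm1$ when $\ell+1\equiv \pm a^{-1}$ appropriately, but in any case $\sin((\ell+1)a\pi/b)\ne 0$) whenever $b\nmid \ell+1$. Since there are infinitely many $\ell$ with $b\nmid\ell+1$ (all $\ell\not\equiv -1\bmod b$), we get infinitely many $\ell$ with $a_f(p^\ell)=p^{\ell(\kappa-1)/2}\lambda_f(p^\ell)\ne 0$, contradicting the assumption. The only remaining possibility is that $a_f(p^\ell)=0$ for at most one exponent $\ell$, which trivially yields the claim.

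The main obstacle, such as it is, is handling the degenerate subcase $\theta_p\in\{0,\pi\}$ cleanly: by \eqref{ezeropi} one then has $\lambda_f(p^\ell)=\pm(\ell+1)\ne 0$ for every $\ell$, so $a_f(p^\ell)\ne 0$ for all $\ell$ and there is nothing to prove; this must be dispatched first so that the argument via \eqref{esines} applies. One should also remember to record that $\lambda_f(p^\ell)\ne 0$ is equivalent to $a_f(p^\ell)\ne 0$ since $p\nmid N$ and the normalizing factor $p^{\ell(\kappa-1)/2}$ is positive. No sieving or Sato--Tate input is needed here; the lemma is purely a consequence of Deligne's bound \eqref{edel} (which makes the parametrization \eqref{econj}--\eqref{esines} legitimate) together with elementary properties of the sine function and multiplicativity.
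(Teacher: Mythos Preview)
Your argument is correct, and it is precisely the alternative route the paper alludes to after its own proof (``Alternatively, it is possible to deduce the lemma from equations~\eqref{esines},~\eqref{ezeropi} and~\eqref{AL}; we leave the details to the reader''). The paper's written proof is different and more elementary: for $p\nmid N$ it shows by induction on~$\ell$, using only the recurrence~\eqref{erecu}, that among any two consecutive exponents $\ell,\ell+1$ at least one has $a_f(p^{\ell'})\ne 0$, since if $a_f(p^{\ell+1})=0$ then $a_f(p^{\ell+2})=-p^{\kappa-1}a_f(p^\ell)\ne 0$. This avoids Deligne's bound and the angle parametrization entirely. Your approach, on the other hand, yields the sharper structural fact that the set of vanishing exponents is either empty or a single residue class $\ell\equiv -1\pmod b$ for some $b\ge 2$.

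One expository remark: the ``by contradiction'' framing is unnecessary and the clause ``or even just for all $\ell$ in an infinite set'' is not the negation of the lemma and is not actually contradicted by what you derive. The argument is cleaner as a direct trichotomy on $\theta_p$: if $\theta_p\in\{0,\pi\}$ there are no zeros by~\eqref{ezeropi}; if $\theta_p/\pi\notin\Q$ there are no zeros since $(\ell+1)\theta_p\notin\pi\Z$; if $\theta_p/\pi=a/b$ in lowest terms with $b\ge 2$, the zeros are exactly the $\ell$ with $b\mid\ell+1$. This also makes the final sentence about ``at most one exponent'' superfluous.
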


\begin{proof}
If $p| N$, then we know from the Atkin-Lehner theory
that 
\begin{equation}\label{AL}
a_f\left(p^\ell\right) = a_f(p)^{\ell} \neq 0
\end{equation}
as $N$ is square-free
(see page 29 of \cite{KO}). We shall now only consider 
primes $p$ with $(p,N)=1$.
We shall indeed prove that among two consecutive
non-negative integers
${(\ell, \ell+1)}$, at least one, say~${\ell'}$, satisfies
${a_f\left(p^{{\ell'}}\right) \neq 0}$.

Our claim is true for $\ell=0$ since ${a_f(1)=1}$. Let 
us assume (induction hypothesis) that it is true 
for a pair $(\ell, \ell+1)$.

If ${a_f\left(p^{\ell+1}\right)\neq 0}$, then our claim is true for the pair 
${(\ell + 1, \ell+2)}$.
On the other hand, if 
${a_f\left(p^{\ell+1}\right)= 0}$, then  ${a_f\left(p^{\ell}\right)\neq 0}$
by our induction hypothesis,
and~\eqref{erecu} implies that
$$
a_f\left(p^{\ell+2}\right) = a_f(p) 
a_f\left(p^{\ell+1}\right) - p^{\kappa-1} 
a_f\left(p^{\ell}\right) = - p^{\kappa-1} a_f\left(p^{\ell}\right) \neq 0.
$$
Hence, our claim is again true for the pair $(\ell + 1, \ell+2)$. 
This proves the lemma.
\end{proof}

Alternatively, it is possible to deduce the lemma from  
equations~\eqref{esines},~\eqref{ezeropi} and~\eqref{AL}; 
we leave the details to the reader. 

\paragraph{Proof of the implication \ref{inezhalf}$\Rightarrow$\ref{inezk}.}

We assume that \eqref{eCNSbis} holds and want to find a positive 
integer~$\nu$ such that \eqref{etnezk} holds. 

Since~(\ref{eCNSbis}) is the same when ${k=2 h}$ and ${k=2 h +1}$, namely 
${a_f\left(p^{\ell}\right) \neq 0}$ for  ${p^{\ell} \le h}$, it is sufficient 
to consider the case when~$k$ is odd, say ${k=2 h +1}$.

We define~$\Sigma$ as the set of all primes ${p\le 2k}$
and those finitely many primes $p$ for which $a_f(p) \ne 0$
but $a_f(p^{\ell})=0$ for some $\ell >1$. 
By Lemma~\ref{lnup}, to each ${p\in\Sigma}$ we may associate 
an integer~$\ell_p$ such that
\begin{align}
\label{epnup}
a_f\left(p^{\ell_p}\right)&\neq 0, \\
\label{egrek}
p^{\ell_p} &> k.
\end{align}
By the Chinese remainder theorem, one can find a positive integer $r$ such that
\begin{equation}
\label{er}
r \equiv p^{\ell_p} \bmod p^{\ell_p+1} \qquad (p\in \Sigma). 
\end{equation}
We will show that there exist infinitely many positive integers~$m$ such that
\begin{equation}
\label{enonzero}
a_f(Dm+r+j)\neq 0 \qquad (-h \le j \le h), 
\end{equation}
where 
$$
D = \prod_{p \in \Sigma} p^{\ell_p +1}.
$$
If~$m$ is any such integer, then, setting ${\nu=r+Dm-h-1}$, we clearly obtain \eqref{etnezk}.

For ${-h \le j \le h}$, we introduce the linear forms ${L_j(m)=a_jm+b_j}$ by
\begin{equation*}
Dm+r+j = \gcd(D, r+j)L_j(m) = \gcd(D, r+j) (a_jm+b_j).
\end{equation*}
(There is no risk of confusing the Hecke eigenvalues $a_{f}(n)$ and the integers~$a_j$.) Let us first check that the~$k$ linear forms $L_j$ satisfy the conditions 
of Theorem~\ref{thsieving} and Theorem~\ref{thavoiding}.

\begin{itemize}
\item
By construction, for every~$j$, we have ${a_j \neq 0}$ and ${\gcd(a_j, b_j) =1}$.

\item
For ${i\neq j}$, we have ${D(r+j)-D(r+i) = D(j-i) \neq 0}$. Since ${a_i b_j - a_j b_i}$ 
is a divisor of ${D|j-i|}$, it is not~$0$. 

\item
By construction,~$a_i$ is a divisor of~$D$ which has only prime divisors 
from~$\Sigma$.

\item
Similarly, ${a_i b_j - a_j b_i}$ is a divisor of ${D|j-i|}$, where~$D$ and ${|j-i| \le k}$ 
have only prime divisors from~$\Sigma$.

\item
We finally have to verify that every $a_j$ is divisible by every prime in the set~$\Sigma$. Since ${r \equiv p^{\ell_p} \bmod p^{\ell_p+1}}$
and ${p^{\ell_p} > k > h}$,  we have ${\ord_p(r+j) \le \ell_p}$ 
(where $\ord_p$ denotes the $p$-adic valuation). Now since 
$$
\ord_p(a_j) = \ord_p(D) - \ord_p(r+j)
$$
and  ${p^{\ell_p+1} \mid D}$, we have ${\ord_p(a_j) \ge 1}$.

\end{itemize}

We can now apply Theorems \ref{thsieving} and \ref{thavoiding}, 
taking for the 
unwanted set of primes those which are not in $\Sigma$ and for which 
$a_f(p)=0$. Thus, there exist infinitely many positive integers~$m$ 
such that each of the~$k$ numbers
${L_{-h}(m),\ldots, L_{h}(m)}$ is square-free, not divisible by any prime from~$\Sigma$ 
nor by any prime~$p$ for which ${a_f(p)=0}$. It follows that for such~$m$, we have
$$
a_f(L_j(m))\ne 0 \qquad(-h\le j\le h). 
$$
In order to prove that for these~$m$
we have \eqref{enonzero}, it is enough to prove that 
\begin{equation}
\label{enzergcd}
a_f(\gcd(D, r+j))\neq 0 \qquad(-h \le j\le h).
\end{equation}
When ${j =0}$, for any~$p$ in~$\Sigma$ we have ${p^{\ell_p} \,\|\, r}$
so that ${p^{\ell_p} \,\|\, \gcd(D, r)}$. Since 
${a_f(p^{\ell_p})\ne 0}$ by \eqref{epnup}, we obtain ${a_f(\gcd(D, r)) \neq 0}$
 by multiplicativity. 

If ${j \neq 0}$, then,  for ${p \in \Sigma}$ we have ${\ord_p(j)<\ell_p}$ 
because ${p^{\ell_p}>k}$ by \eqref{egrek}. 
Hence
${p^{\mu} \,\|\, \gcd(D, r+j)}$ implies that  ${p^{\mu} \,\|\,j}$.  It follows that
${p^{\mu} \le h \le k/2}$, and our assumption \eqref{etnezhalf} 
implies that ${a_f(p^{\mu})\neq 0}$. By multiplicativity, this 
proves \eqref{enzergcd} for ${j\ne 0}$ as well. 

The proof of the implication \ref{inezhalf}$\Rightarrow$\ref{inezk} 
is now complete, and so is the proof of Theorem~\ref{thdes}. 
\qed

\paragraph{Acknowledgements.} The authors would like to thank
the referee for her/his extremely careful reading and relevant suggestions
which improved the exposition of the paper.

F.~L. worked on this paper during a 
visit to the Institute of Mathematics of Bordeaux as an ALGANT 
scholar in July 2011. He thanks ALGANT for support 
and the French Ministry of Defence for allowing him, after some time, to 
enter the IMB building. In addition, this author was also partially 
supported by grant CPRR160325161141 and an A-rated scientist award 
both from the NRF of 
South Africa, by grant no. 17-02804S of the Czech Granting Agency and by IRN ``GANDA'' (CNRS). 

Yu.~B. was partially supported by the ALGANT, by the  Indo-European 
Action Marie Curie (IRSES moduli) and by IRN ``GANDA'' (CNRS).

J.-M.~D. was partially supported by the CEFIPRA project 5401-A, by the  
Indo-European Action Marie Curie (IRSES moduli) 
and by IRN ``GANDA'' (CNRS).

S.~G.  was visiting the  Institute of Mathematics of 
Bordeaux as an ALGANT scholar in 2014 when 
she started working on this project. She acknowledges 
support by the ALGANT, a SERB grant and the DAE number
 theory plan project.

During the final stage of preparation of this paper Yu.~B. 
and  J.-M.~D. enjoyed hospitality of the Institute of 
Mathematical Sciences at Chennai.

The authors thank Satadal Ganguly, Florent Jouve and 
Gérald Tenenbaum for  helpful advice.

{\footnotesize

\bigskip

\noindent
\textsc{Yuri ~F.~Bilu}\newline
{Université de Bordeaux and CNRS}\newline
{Institut de Mathématiques de Bordeaux UMR 5251}\newline
{33405, Talence, France}; \newline
\textsf{yuri@math.u-bordeaux.fr} 

\bigskip

\noindent
\textsc{Jean-Marc~Deshouillers}\newline
Univ. Bordeaux, CNRS and Bordeaux INP\newline
Institut de Mathématiques de Bordeaux UMR 5251\newline
F-33405, Talence, France;\newline
\textsf{jean-marc.deshouillers@math.u-bordeaux.fr} 

\bigskip

\noindent
\textsc{Sanoli ~Gun}\newline
{Institute of Mathematical Sciences, HBNI}\newline
{C.I.T Campus, Taramani}\newline
{Chennai  600 113, India};\newline
\textsf{sanoli@imsc.res.in}

\bigskip

\noindent
\textsc{Florian~Luca}\newline
{School of Mathematics}\newline
{University of the Witwatersrand}\newline
{Private Bag X3, Wits 2050, South Africa};

\smallskip\noindent
{Department of Mathematics}\newline
{Faculty of Sciences}\newline
{University of Ostrava}\newline
{30. dubna 22, 701 03 Ostrava 1, Czech Republic};\newline
\textsf{florian.luca@wits.ac.za}

}

\end{document}